\documentclass[11pt]{amsart}
\usepackage{amsmath,amssymb,amscd,amsfonts,verbatim}
\usepackage{paralist}
\usepackage[mathscr]{eucal}

\newtheorem{thm}{Theorem}[subsection]
\newtheorem{lem}[thm]{Lemma}

\newtheorem{prop}[thm]{Proposition}

\newtheorem{conj}[thm]{Conjecture}

\newtheorem{assu-nota}[thm]{Assumption--Notation}
\theoremstyle{remark}
\newtheorem{example}[thm]{Example}
\newtheorem{remark}[thm]{Remark}
\newcommand{\pionealg}{\pi_1^{\mathrm{alg}}}
\newcommand{\iso}{\cong}

\newcommand{\ol}{\overline}
\newcommand{\C}{\mathbb C}
\newcommand{\Z}{\mathbb Z}
\newcommand{\Q}{\mathbb Q}

\newcommand{\pp}{\mathbb P}

\newcommand{\btimes}{\,{\scriptstyle \boxtimes}\,}
\DeclareMathOperator{\albdim}{Albdim}
\DeclareMathOperator{\Alb}{Alb}

\DeclareMathOperator{\Hom}{Hom}

\DeclareMathOperator{\Pic}{Pic}

\DeclareMathOperator{\Spec}{Spec}

\newcommand{\al}{\alpha}
\newcommand{\be}{\beta}
\newcommand{\ga}{\gamma}
\newcommand{\la}{\lambda}
\newcommand{\Ga}{\Gamma}
\newcommand{\De}{\Delta}

\newcommand{\si}{\sigma}

\newcommand{\cM}{\mathcal M}

\newcommand{\OO}{\mathcal{O}}

\newcommand{\inv}{^{-1}}
\newcommand{\unu}{^{\nu}}

\numberwithin{equation}{subsection}
\usepackage[dvips,colorlinks=false]{hyperref}

\title{The geography of irregular surfaces}
\author{Margarida Mendes Lopes and Rita Pardini}

\thanks{The first author is a member of the Center for Mathematical
Analysis, Geometry and Dynamical Systems  and the second author is a member of G.N.S.A.G.A.-I.N.d.A.M. This research was partially supported by  the Funda\c c\~ao para a Ci\^encia e Tecnologia (FCT- Portugal).} \begin{document}
\begin{abstract}
We give an overview  of irregular complex surfaces of general type, discussing in particular the distribution of 
the numerical invariants  $K^2$, $\chi$ for  the minimal  ones. \\
This is an expanded version of the talk given by the second author at the workshop ``Classical Algebraic Geometry Today'',
M.S.R.I., January 26--30, 2009.\\

 {\it Mathematics Subject Classification (2000)}: 14J29. 
\end{abstract}
\maketitle
\tableofcontents

\section{Introduction}

Let $S$ be a minimal surface of general type and let $K^2, \chi$ be its main  numerical invariants (cf. \S \ref{ssec:invariants}). For every pair of positive integers  $a,b$ the surfaces with $K^2=a$, $\chi=b$ belong to finitely many irreducible families, so that in principle their classification is possible. In practice, the much weaker {\em geographical problem}, i.e. determining  the  pairs $a,b$ for which there exists a minimal surface of general type  with $K^2=a$, $\chi=b$, is quite hard. 

In the  past, the main focus in the study of  both the geographical problem and the fine classification of surfaces of general type has been on {\em regular} surfaces, namely surfaces that have no global 1-forms, or, equivalently, whose first Betti number is 0. The reason for this is twofold: on one hand,   the canonical map of regular surfaces is easier to understand, on the other hand  complex surfaces are the main source of examples of differentiable 4-manifolds, hence the  simply connected ones  are considered especially interesting from that point of view.

So, while, for instance, the geographical problem is by now almost settled and the fine classification of some families of regular surfaces is accomplished, little is known about irregular surfaces of general type. In recent years, however, the use of new methods and the revisiting of old ones have produced several new  results.

Here we give an overview  of these results,  with special emphasis on the geographical problem.  In addition, we give several examples and discuss some open questions and  possible generalizations to higher dimensions (e.g.,  Thm. \ref{thm:Severi_3folds}). 
\smallskip

{\bf Notation and conventions.}  We work over the complex numbers. 
All varieties are projective algebraic and, unless otherwise specified, smooth.
We denote by $J(C)$ the Jacobian of a curve $C$.

Given varieties $X_i$ and sheaves $\mathcal F_i$ on $X_i$, $i=1,2$,  we denote by $\mathcal F_1\btimes \mathcal F_2$ the sheaf $p_1^*{\mathcal F_1}\otimes p_2^*\mathcal F_2$ on $X_1\times X_2$, where $p_i\colon X_1\times X_2\to X_i$ is the projection.

\section{Irregular surfaces of general type}

Unless otherwise specified, a surface is a smooth projective complex surface. 

A surface $S$ is {\em of general type} if the canonical divisor $K_S$ is big. Every surface of general type has a birational morphism onto a unique minimal model, which is characterized by the fact that $K_S$ is nef. 
A surface (or more generally a variety) $S$ is called {\em irregular} if the {\em irregularity} $q(S):=h^0(\Omega^1_S)=h^1(\OO_S)$ is $>0$.

 \subsection{Irrational pencils}\label{ssec:pencils}

A {\em pencil} of a surface $S$ is a morphism with connected fibres $f\colon S\to B$, where $B$ is a smooth curve. A map $\psi\colon S\to X$, $X$ a variety, is {\em composed with a pencil} if there exists a pencil $f\colon S\to B$ and a map $\ol \psi\colon B \to X$ such that $\psi=\ol\psi \circ f$.
 The {\em genus} of the pencil $f$ is by definition the genus $b$ of $B$.  The pencil $f$ is {\em irrational} if $b>0$. Since pull back of forms induces an injective map $H^0(\Omega^1_B)\to H^0(\Omega^1_S)$, a surface with an irrational pencil is irregular. Clearly, the converse is not true (cf. Remark \ref{rem:pencils}).

 In addition, if the pencil $f$ has genus $\ge 2$, by   pulling back two independent $1$-forms of $B$ one gets independent $1$-forms $\al$ and $\be$ on $S$ such that $\al\wedge \be=0$. The following classical result  (cf.  \cite{beauville_libro} for a proof)  states that this condition is equivalent to the existence of an irrational pencil of genus $\ge 2$:
 \begin{thm}[Castelnuovo--De Franchis]\label{thm:CDF}
 Let $\al, \be\in H^0(\Omega^1_S)$ be linearly independent forms such that $\al\wedge\be=0$. Then there exists a pencil $f\colon S\to B$ of genus $\ge 2$ and $\al_0, \be_0\in H^0(\omega_B)$ such that $\al=f^*\al_0$, $\be=f^*\be_0$.
 \end{thm}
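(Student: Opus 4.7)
The plan is to realise $B$ as the base of a Stein factorisation associated with the meromorphic function $\al/\be$, and then to check that both $\al$ and $\be$ descend to holomorphic $1$-forms on $B$.

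First, I would observe that the identity $\al\wedge\be=0$ forces $\al$ and $\be$ to be pointwise proportional as covectors: locally at a point where $\be$ does not vanish one may write $\al=g\be$ for a uniquely determined holomorphic function $g$, and these local $g$'s patch to a single meromorphic function $h=\al/\be$ on $S$. Resolving the indeterminacy locus of $h$ by blowing up gives a birational morphism $\si\colon\tilde S\to S$ together with a morphism $\tilde h\colon\tilde S\to\pp^1$. Applying Stein factorisation to $\tilde h$ produces a smooth curve $B$, a surjective morphism $f\colon\tilde S\to B$ with connected fibres, and a finite morphism $\psi\colon B\to\pp^1$ with $\tilde h=\psi\circ f$.

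The next step is to descend $\si^*\al$ and $\si^*\be$ to $B$. Here I would use the fact that holomorphic $1$-forms on a smooth projective surface are closed (by Hodge theory), so $d\al=d\be=0$. Differentiating the meromorphic identity $\si^*\al=\tilde h\cdot\si^*\be$ then yields $d\tilde h\wedge\si^*\be=0$, so that $\si^*\be$ is pointwise proportional to $d\tilde h$ and in particular vanishes on tangent vectors to the fibres of $f$; the same is true of $\si^*\al$. Descent along $f$ then gives holomorphic forms $\al_0,\be_0\in H^0(\omega_B)$ with $\si^*\al=f^*\al_0$ and $\si^*\be=f^*\be_0$. Since the $\si$-exceptional curves are rational and $B$ has (as will be established) genus $\ge 2$, they are automatically contracted by $f$, so $f$ descends to an honest morphism $S\to B$. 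Finally, linear independence of $\al,\be$ forces $\al_0,\be_0$ to be linearly independent, hence $h^0(\omega_B)\ge 2$ and $g(B)\ge 2$.

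The main obstacle, in my view, is the descent step: one must check that the forms produced on $B$ are genuinely holomorphic rather than merely meromorphic, and that the $\si$-exceptional divisors are indeed contracted by $f$. Both points require some care at singular fibres of $f$ and at the blown-up locus, and are precisely the reason one passes to the Stein factorisation $f$ before attempting to extract forms from $\tilde h$ itself (the latter being valued in $\pp^1$, which carries no nonzero holomorphic $1$-form).
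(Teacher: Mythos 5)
The paper itself offers no proof of this statement (it refers the reader to Beauville's book), so the comparison is with the classical argument given there; your outline follows exactly that argument: pass to the meromorphic function $h=\al/\be$, resolve indeterminacy, take the Stein factorization $f\colon\tilde S\to B$, use closedness of holomorphic $1$-forms, descend the forms to $B$, and contract the exceptional curves because $B$ has genus $\ge 2$. The architecture is right, and you correctly identify the descent step as the crux. (A minor omission: you should note at the outset that $h$ is nonconstant, which is exactly where linear independence of $\al,\be$ enters; otherwise there is no fibration to speak of.)

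However, the one sentence you offer in place of the descent argument does not suffice, and this is a genuine gap rather than a routine verification. You argue that $d\tilde h\wedge\si^*\be=0$ makes $\si^*\be$ proportional to $d\tilde h$, hence it ``vanishes on tangent vectors to the fibres of $f$,'' and you conclude that it descends. But vanishing on fibre directions is necessary, not sufficient, for a $1$-form to be a pullback from the base: on $\C^2$ with the projection $(x,y)\mapsto x$, the form $y\,dx$ kills all fibre tangent vectors yet is not pulled back from the $x$-line. What is missing is the second application of closedness: writing $\si^*\be=g\,d\tilde h$ with $g$ meromorphic, the identity $0=d(\si^*\be)=dg\wedge d\tilde h$ forces $g$ to be constant on the fibres of $\tilde h$, hence (because $f$ has connected fibres and $\tilde h=\psi\circ f$ with $\psi$ finite) $g=f^*\tilde g$ for a meromorphic function $\tilde g$ on $B$. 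Then $\si^*\be=f^*(\tilde g\,d\psi)$ and $\si^*\al=f^*(\psi\,\tilde g\,d\psi)$, and these candidate forms $\be_0=\tilde g\,d\psi$, $\al_0=\psi\,\tilde g\,d\psi$ are holomorphic because their pullbacks under the surjection $f$ are. This second use of $d\be=0$ is the heart of the Castelnuovo--De Franchis argument and must appear explicitly; once it does, the remaining points you flag (holomorphy of $\al_0,\be_0$, linear independence giving $g(B)\ge 2$, and the contraction of the rational $\si$-exceptional curves by $f$) go through as you describe.
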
 
 
 Let $\al_1, \dots \al_k, \be \in H^0(\Omega^1_S)$ be linearly independent forms such  that $\al_1, \dots \al_k$ are pull backs from a curve  via a pencil $f\colon S \to B$ and $\be\wedge \al_j=0$ (notice that it is the same to require this for one index $j$ or for all $j=1,\dots k$). By the Theorem of Castelnuovo--De Franchis (Theorem \ref{thm:CDF}), there exists a pencil  $h\colon S\to D$  such that, say, $\al_1$ and $\be$ are pull backs of independent  $1$-forms of $D$.  Let $\psi:=f\times h\colon S\to B\times D$. Then the forms $\al_j\wedge \be$ are pull backs of nonzero $2$-forms of $B\times D$. Since $\al_j\wedge \be=0$, it follows that the image of $\psi$ is a curve $C$ and that $\al_1,\dots \al_k, \be$ are pullbacks of $1$-forms of $C$. This shows that  for every $b\ge 2$ there is 
  one-to-one correspondence between pencils of $S$ of genus $b$ and subspaces $W\subset H^0(\Omega^1_S)$ of dimension $b$  such that $\wedge^2 W=0$ and $W$ is maximal with this property.
 
The existence of a subspace $W$ as  above can be interpreted in terms of  the cohomology of $S$ with complex coefficients, thus showing that the existence of a pencil of genus $\ge 2$ is a topological property (cf. \cite[Thm. 1.10]{catanese_irregular}). On the contrary, the existence of pencils of genus 1 is not detected by the topology (cf. Remark \ref{rem:pencils}).

A classical result of Severi (cf. \cite{Severi}, \cite{samuel}) states that a surface of general type has finitely many pencils of genus $\ge 2$.
However a surface of general type can have infinitely many pencils of genus 1, as it is shown by the following example:
\begin{example}
Let $E$ be a curve of genus 1, $O\in E$ a point,  $A:=E\times E$ and  $L:=\OO_A(\{O\}\times E+E\times \{O\})$. For every $n\ge 1$, the map $h_n\colon A\to E$ defined by $(x,y)\mapsto x+ny$ is a pencil of genus 1.  Let now $S\to A$ be a double cover branched on a smooth ample curve $D\in |2dL|$ (see \S \ref{ssec:constructions}, (d) for a quick review of double covers).  The surface $S$ is minimal of general type, and for every $n$ the map $h_n$ induces a pencil $f_n\colon S\to E$. The general fiber of $f_n$ is a double cover of an elliptic curve isomorphic to $E$, branched on $2d(n^2+1)$ points, hence it has genus $dn^2+d+1$. It follows that the pencils $f_n$ are all distinct.
\end{example}

\subsection{The Albanese map}\label{ssec:albanese}

 The {\em Albanese variety\,}  of $S$ is defined as $\Alb(S)\!:=H^0(\Omega^1_S)^{\vee}/H_1(S,\Z)$. By Hodge theory, $\Alb(S)$ is a compact complex torus and, in addition,  it can be embedded in projective space, namely it is an abelian variety. For a fixed base point $x_0\in S$ one defines  the Albanese morphism $a_{x_0}\colon S\to \Alb(S)$ by $x\mapsto \int_{x_0}^x\!\!\!-$ (see \cite[Ch.V]{beauville_libro}). Choosing a  different base point in $S$, the Albanese morphism just changes by a translation of $\Alb(S)$, so we often  ignore the base point and just  write $a$.
By construction, the map $a_*\colon H_1(S,\Z)\to H_1(\Alb(S),\Z)$ is surjective, with kernel equal to the torsion subgroup of $H_1(S,\Z)$, and the map $a^*\colon H^0(\Omega^1_{\Alb(S)})\to H^0(\Omega^1_S)$ is an isomorphism, so if $q(S)>0$ it follows immediately that $a$ is non constant. The dimension of $a(S)$ is called the {\em Albanese dimension} of $S$ and it
 is denoted by $\albdim(S)$. $S$ is of {\em Albanese general type} if $\albdim(S)=2$ and $q(S)>2$.

The morphism $a\colon S\to \Alb(S)$ is characterized  up to unique  isomorphism by the following universal property:
for every morphism $S\to T$, with $T$ a complex torus,  there exists a unique factorization $S\xrightarrow{a} \Alb(S)\to T$.
It follows immediately that the image of  $a$ generates $\Alb(S)$, namely that  $a(S)$ is not contained  in any proper subtorus of $\Alb(S)$.
Using the Stein factorization and the fact that for a smooth curve $B$  the Abel--Jacobi map $B\to J(B)$ is an embedding, one  can show  that  if $\albdim(S)=1$, then $B:=a(S)$ is a smooth curve  of genus $q(S)$ and the map $a\colon S\to B$ has connected fibers. In this case,  the map $a\colon S\to B$ is called the {\em Albanese pencil} of $S$.
By the analysis of \S \ref{ssec:pencils},  $\albdim(S)=1$ (``$a$ is composed with a pencil'')  iff  $\al\wedge \be=0$ for every pair of $1$-forms $\al,\be\in H^0(\Omega^1_S)$ iff there exists a  surjective map $f\colon S\to B$ where $B$ is a smooth curve of genus $q(S)$. In this case,  if $q(S)>1$ then $f$ coincides with the Albanese pencil.

Since, as we  recalled in \S \ref{ssec:pencils}, the existence of a pencil of given genus $\ge 2$ is a topological property of $S$, the Albanese dimension of a surface is a topological property.

\begin{remark}\label{rem:pencils} Let   $f\colon S\to B$ be  an irrational pencil. By the universal property of the Albanese variety, there is a morphism of tori $\Alb(S)\to J(B)$. The differential of this morphism at $0$ is dual to $f^*\colon H^0(\omega_B)\to H^0(\Omega^1_S)$, hence it is surjective and  $\Alb(S)\to J(B)$ is surjective, too. So if $\albdim (S)=2$ and $\Alb(S)$ is simple, $S$ has no irrational pencil. It is easy to produce examples of this situation, for instance by considering surfaces that are complete intersections inside a simple abelian variety (cf. \S \ref{ssec:constructions}, (c)).

If $\albdim(S)=q(S)=2$, $S$ has an irrational pencil iff there exists a surjective map $\Alb(S)\to E$, where $E$ is an elliptic curve.
So, by taking double covers of principally polarized abelian surfaces branched on a smooth ample curve (cf. \S \ref{ssec:constructions}, (d)) one can construct a family of minimal surfaces of general type such that the general surface in the family  has no irrational pencil  but  some special ones have.
 \end{remark}
If $\albdim(S)=2$, then $a$ contracts finitely many irreducible curves.  By Grauert's criterion (cf. \cite[Thm.III.2.1]{bpv}), the intersection matrix  of the set of curves contracted by $a$ is negative definite.
 An irreducible curve $C$ of $S$ is contracted by $a$ if and only if the restriction map $H^0(\Omega^1_S)\to H^0(\omega_{C\unu})$ is trivial,  where $C\unu \to C$ is the normalization. So, every rational curve of $S$ is contracted by $a$. More generally, by the universal property the map $C\unu \to \Alb(S)$ factorizes through $J(C\unu)=\Alb(C\unu)$, hence $a(C)$  spans an abelian subvariety of $\Alb(C)$ of dimension at most $g(C\unu)$. So, for instance, if $\Alb(S)$ is simple every curve of $S$ of geometric genus $<q(S)$ is contracted by $a$.
 
An irreducible curve $C$   of $S$ such that $a(C)$ spans a proper abelian subvariety $T\subset \Alb(S)$ has $C^2\le 0$.  In particular, if $C$ has geometric genus $<q(S)$ then  $C^2\leq 0$.  Indeed, consider the non constant  map $\overline{a}\colon S\to A/T$ induced by $a$. If the image of $\overline{a}$ is a surface, then $C$  is  contracted to a point,  hence $C^2<0$. If  the image of $\overline{a}$ is a curve, then $C$ is contained in a fiber, hence by Zariski's lemma (\cite[Lem.III.8.9]{bpv}) $C^2\le 0$  and $C^2=0$ iff $C$ moves in an irrational pencil. 
In view of the fact that $S$ has finitely many irrational pencils of genus $\ge 2$, the irreducible curves of $S$ whose image via $a$ spans an abelian subvariety of $\Alb(S)$ of codimension $>1$  belong to finitely many numerical equivalence classes.

We close this section by giving an example that shows that the degree of the Albanese map of a surface with $\albdim =2$ is not a topological invariant.
\begin{example}\label{ex:albdeg}
We describe  an irreducible  family  of smooth minimal  surfaces of general type   such that the 
 Albanese map of the general element of the family is generically  injective but for some special elements  the Albanese map
 has degree 2 onto its image. 
The examples are constructed as divisors in a double cover $p\colon V\to A$ of an abelian
threefold  $A$.

Let $A$ be an abelian threefold and let $L$ be an ample line bundle of $A$ such that $|2L|$ contains a smooth divisor $D$. There is a double cover $p\colon V\to A$   branched on $D$ and such that $p_*\OO_V=\OO_A\oplus L\inv$ (cf. \S \ref{ssec:constructions}, (d)). The variety $V$ is smooth and, arguing as in \S \ref{ssec:constructions}, (d),  in one shows that  the Albanese map of $V$ coincides with $p$. Notice that this implies that the map 
$p_*\colon H_1(V,\Z)\to H_1(A,\Z)$ is an isomorphism up to torsion (cf. \S \ref{ssec:albanese}). 

Let now $Y\subset A$ be a  very ample divisor such that $h^0(\OO_A(Y)\otimes L\inv)>0$ and set $X:=p^*Y$. If $Y$
is general, then both $X$ and $Y$ are smooth.
Let now $X'\in |X|$ be a smooth element. By the adjunction formula, $X'$ is smooth of general type.  By the Lefschetz theorem on hyperplane sections, the
inclusion
$X'\to V$ induces an isomorphism $\pi_1(X')\simeq \pi_1(V)$, which in turn gives an isomorphism 
$H_1(X',\Z)\simeq H^1(V,\Z)$. Composing with $p_*\colon H_1(V,\Z)\to H_1(A,\Z)$ we get an
isomorphism (up to torsion) $H_1(X',\Z)\to H_1(A,\Z)$, which is induced by the map $p|_{X'}\colon
X'\to A$. Hence $p|_{X'}\colon
X'\to A$ is the Albanese map of $X'$. Now the map $p|_{X'}$ has degree 2 onto its image if $X'$
is invariant under the involution $\si$ associated to $p$, and it is generically injective
otherwise.  By the projection formula for double covers, the general element of $|X|$ is not
invariant under $\si$ iff $h^0(\OO_A(Y)\otimes L\inv)>0$, hence we have the required example.
\end{example}

\subsection{Numerical invariants and geography}\label{ssec:invariants}
To a minimal complex surface $S$ of general type, one can attach several  integer invariants, besides the irregularity $q(S)=h^0(\Omega^1_S)>0$:
\begin{itemize}
\item  the self intersection $K^2_S$ of the canonical class,
\item the {\em geometric genus}  $p_g(S):=h^0(K_S)=h^2(\OO_S)$,
\item the {\em holomorphic  Euler--Poincar\'e characteristic}, $\chi(S):=h^0(\OO_S)-h^1(\OO_S)+h^2(\OO_S)=1-q(S)+p_g(S)$,
\item the second Chern class $c_2(S)$ of the tangent bundle, which coincides with the topological Euler characteristic of $S$.
\end{itemize}

All these invariants are determined by the topology of $S$ plus the orientation induced by the complex structure. Indeed (cf. \cite[I.1.5]{bpv}), by Noether's formula we have:
\begin{equation}
K^2_S+c_2(S)=12\chi(S), 
\end{equation}
and by the Thom--Hirzebruch index theorem:
\begin{equation}
\tau(S)=2(K^2_S-8\chi(S)),
\end{equation}
where $\tau(S)$ denotes the index of the intersection form on $H^2(S,\C)$, namely the difference between the number of positive and negative eigenvalues. So $K^2_S$  and $\chi(S)$ are determined by the (oriented)  topological invariants $c_2(S)$ and $\tau(S)$.
By Hodge theory the irregularity $q(S)$ is equal to $\frac 12b_1(S)$, where $b_1(S)$ is the first Betti number. So $q(S)$ is also  determined by the topology of $S$ and the same is true for $p_g(S)=\chi(S)+q(S)-1$.

It  is apparent from the definition that these  invariants are not independent. So it is usual to take $K^2_S$, $\chi(S)$ (or, equivalently, $K^2_S$, $c_2(S)$) as the main numerical invariants. These determine the Hilbert polynomial of the $n$-canonical image of $S$ for $n\ge 2$, and by a classical result (cf. \cite{gieseker}) the coarse moduli space $\cM_{a,b}$ of surfaces of general type with $K^2=a$, $\chi=b$ is a quasi projective variety. Roughly speaking, this means that surfaces with fixed  $K^2$ and $\chi$ are parametrized by a finite number of irreducible varieties, hence in principle they can be classified. In practice, however, the much more basic {\em geographical question}, i.e., ``for what values of $a, b$ is $\cM_{a,b}$ nonempty?'' is already  non trivial.

The invariants $K^2,\chi$ are subject to the following restrictions:
\begin{itemize}
\item $K^2$, $\chi>0$,
\item $K^2\ge 2\chi-6$ (Noether's inequality),
\item $K^2\le 9\chi$ (Bogomolov--Miyaoka--Yau inequality).
\end{itemize}
All these inequalities are sharp and it is known that for ``almost all''  $a,b$ in the admissible range the space $\cM_{a,b}$ is nonempty. (The possible exceptions seem to be due to the method of proof and not to the existence of special areas  in the admissible region for the invariants of surfaces of general type).

 In this note we focus on the geographical question  for irregular surfaces. More precisely, we address the following questions:
 
  ``for what values of $a,b$ does there exist a minimal surface of general type $S$ with $K^2=a$, $\chi=b$ such that:
\begin{itemize}
\item $q(S)>0$?''
\item $S$ has an irrational pencil?''
\item $\albdim(S)=2$?''
\item $q(S)>0$ and $S$ has no irrational pencil?"
 \end{itemize}
\begin{remark}
If $S'\to S$ is an \'etale cover of degree $d$ and $S$ is minimal of general type, $S'$ is also minimal of general type with invariants $K^2_{S'}=dK^2_S$ and $\chi(S')=d\chi(S)$. The first  three properties listed above are stable under \'etale covers.
Since the first Betti number of a surface is equal to $2q$, an irregular surface has \'etale covers of degree $d$ for any  $d>0$.
 Hence if   for some $a, b$ the answer to one of these three questions is ``yes'' , the same is true for all the pairs $da, db$, with $d$ a positive integer.
\end{remark}

The main known  inequalities for the invariants  of irregular surfaces of general type  are illustrated in the following sections. Here we only point out  the following simple consequence of Noether's inequality:
\begin{prop}\label{prop:2chi}
Let $S$ be a minimal irregular surface of general type. Then:
$$K^2_S\ge 2\chi(S).$$
\end{prop}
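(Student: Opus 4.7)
The plan is to combine Noether's inequality $K^2 \ge 2\chi - 6$ with the observation of the preceding remark that an irregular surface admits \'etale covers of arbitrary degree, using the covers to absorb the constant $-6$.

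First I would produce, for every integer $d\ge 1$, an \'etale cover $\pi\colon S'\to S$ of degree $d$. Since $q(S)\ge 1$, the abelianization $H_1(S,\Z)\iso\pi_1(S)^{\mathrm{ab}}$ has positive rank $2q(S)\ge 2$, so it surjects onto $\Z$ and hence onto $\Z/d\Z$; the kernel defines the required cyclic cover. Because $\pi$ is \'etale, $S'$ is again minimal of general type, and the numerical invariants multiply by $d$:
\[K^2_{S'}=dK^2_S,\qquad \chi(S')=d\chi(S).\]

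Next I would apply Noether's inequality to $S'$, which yields
\[dK^2_S=K^2_{S'}\ge 2\chi(S')-6=2d\chi(S)-6,\]
or equivalently
\[K^2_S-2\chi(S)\ge -\tfrac{6}{d}.\]
Here the left-hand side is a fixed integer, while the right-hand side tends to $0$ as $d\to\infty$. Taking any $d\ge 7$ already gives $K^2_S-2\chi(S)>-1$, so the integer $K^2_S-2\chi(S)$ must be nonnegative, which is the desired inequality.

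I do not foresee a genuine obstacle: both ingredients are in place (Noether's inequality is quoted just above, and the existence of \'etale covers of every degree on an irregular surface has been recorded in the remark), and the only mild subtlety is the integrality argument that turns a family of bounds $\ge -6/d$ into the sharp bound $\ge 0$.
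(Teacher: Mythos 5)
Your proposal is correct and is essentially the paper's own argument: pass to an \'etale cover of degree $d\ge 7$ (whose existence follows from $b_1(S)=2q(S)>0$, as recorded in the remark), apply Noether's inequality there, and use integrality of $K^2_S-2\chi(S)$ to conclude. The paper phrases it as a contradiction, but the content is identical.
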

\begin{proof} Assume for contradiction that $K^2_S<2\chi(S)$. Then an \'etale cover $S'$ of degree $d\ge 7$ has $K^2_{S'}<2\chi(S')-6$, violating  Noeher's inequality.
\end{proof}

More generally, the following inequality holds for minimal irregular surfaces of general type (cf. \cite{Debarre_inequalities}): \begin{equation}\label{eq:debarre}
K^2\ge \max\{2p_g, 2p_g+2(q-4)\}.
\end{equation}
The inequality \eqref{eq:debarre} implies  that irregular surfaces with $K^2=2\chi$ have $q=1$.  These surfaces are described in \S \ref{ssec:examples}, (b).

\subsection{Basic constructions}\label{ssec:constructions}
Some constructions of irregular surfaces of general type have already been presented in the previous sections. We list   and describe briefly   the most standard ones:

(a) \underline{Products of curves.\/} Take  $S:=C_1\times C_2$ , with $C_i$ a curve  of genus $g_i\ge 2$. $S$  has  invariants:
$$K^2=8(g_1-1)(g_2-1), \quad \chi=(g_1-1)(g_2-1),\quad  q=g_1+g_2,\quad p_g=g_1g_2.$$
In particular these surfaces satisfy $K^2=8\chi$. The Albanese variety is the product $J(C_1)\times J(C_2)$ and the Albanese map induces an isomorphism onto its image. The two projections $S\to C_i$ are pencils of genus $g_i\ge 2$.
\medskip

(b) \underline{Symmetric products.\/} Take $S:=S^2C$, where $C$ is a smooth curve of genus $g\ge 3$.  Consider the natural map $p\colon C\times C\to S$, which is the quotient map by the involution $\iota$ that exchanges the two factors of $C\times C$.
The ramification divisor of $p$ is the diagonal $\De\subset C\times C$, hence we have:
$$K_{C\times C}=p^*K_{S}+\Delta.$$
Computing intersections on $C\times C$ we get:
$$K^2_S=(g-1)(4g-9).$$
 Global $1$ and $2$-forms on $S^2C$ correspond to forms on $C\times C$ that are invariant under $\iota$.  Writing down the action of $\iota$ on $H^0(\Omega^i_{C\times C})$, one obtains   canonical identifications:
\begin{equation}\label{eq:symmetric}
H^0(\omega_{S})=\wedge^2H^0(\omega_C), \quad H^0(\Omega^1_{S})=H^0(\omega_C).
\end{equation}
Thus  we have:
 $$p_g=g(g-1)/2,\quad q=g,\quad \chi=g(g-3)/2+1.$$
  Since $p_g(S)>0$ and $K^2_S>0$, it follows that $S$ is of general type. 
Notice that by Theorem \ref{thm:CDF}  $S$ has no irrational pencil of genus $\ge 2$, since by \eqref{eq:symmetric} the natural map $\wedge^2H^0(\Omega^1_S)\to H^0(\omega_S)$ is injective.

The points of $S$ can be identified with the effective divisors of degree 2 of $C$. If $C$ is hyperelliptic, then the $g^1_2$ of $C$ gives a smooth rational curve $\Ga$ of $S$ such that $\Ga^2=1-g$. 
Let $(P_0,Q_0)\in C\times C$ be a point: the map $C\times C\to J(C)\times J(C)$ defined by $(P, Q)\mapsto (P-P_0, Q-Q_0)$ is the Albanese map of $C\times C$ with base point $(P_0,Q_0)$. Composing with the addition map, one obtains a map $C\times C\to J(C)$ that is invariant for the action of $\iota$ and therefore induces a map $a\colon S\to J(C)$, which can be written explicitly as $P+Q\mapsto P+Q-P_0-Q_0$. Using the universal property,  one shows that $a$ is the Albanese map of $S$.  By the Riemann--Roch theorem,  if $C$ is not hyperelliptic $a$ is injective,  while if $C$ is hyperelliptic $a$ contracts $\Ga$ to a point and is injective on $S\setminus \Gamma$. 
Since $H^0(\omega_S)$ is the pull back of $H^0(\Omega^2_{J(C)})=\wedge^2H^0(\omega_C)$ via the Albanese map $a$, the points of $S$ where the differential of $a$ fails to be injective are precisely the base points of $|K_S|$. So, if $C$  is not hyperelliptic then $a$ is an isomorphism of $S$ with its image and if $C$ is hyperelliptic, then $a$ gives an isomorphism of $S\setminus \Ga$  with its image.

Notice that as $g$ goes to infinity, the ratio $K^2_S/\chi(S)$ approaches $8$ from below.
\medskip

(c) \underline{Complete intersections.\/} Let $V$ be an irregular variety of dimension $k+2\ge 3$. For instance, one can take  as $V$  an abelian variety or a product of curves not all rational.  Given  $|D_1|, \dots |D_k|$  free and ample linear systems  on $V$ such that $K_V+D_1+\dots +D_k$ is nef and big, we  take $S=D_1\cap\dots \cap D_k$, with $D_i\in |D_i|$ general, so that $S$ is smooth. By the adjunction formula, $K_S$ is the restriction to $S$ of $K _V+D_1+\dots + D_k$, hence $S$ is minimal of general type. Since the $D_i$ are ample,  the Lefschetz Theorem for hyperplane sections gives an isomorphism $H_1(S,\Z)\iso  H^1(V,\Z)$. Hence the Albanese map of $S$ is just the restriction of the Albanese map of $V$. 

The numerical invariants of $S$ can be computed by means of standard exact sequences on $V$. If $k=1$ and $D_1\in |rH|$, where $H$ is a fixed ample divisor, one has:
$$K^2_S=r^3H^3+O(r^2), \quad \chi(S)=r^3H^3/6+ O(r^2),$$
so the ratio $K^2_S/\chi(S)$ tends to $6$ as $r$ goes to infinity.
Similary, for $k=2$ and $D_1, D_2\in |rH|$, one has:
$$K^2_S=4r^4H^4+O(r^3), \quad \chi(S)=7r^4H^4/12+ O(r^3),$$
and $K^2_S/\chi(S)$ tends to $48/7$ as $r$ goes to infinity.
\medskip

(d) \underline{Double covers.\/} If $Y$ is an irregular surface, any  surface $S$ that dominates $Y$ is irregular, too, and $\albdim(S)\ge \albdim (Y)$. The simplest instance of this situation in which the map $S\to Y$ is not birational is that of a double cover. 
A smooth double cover of a variety $Y$ is determined uniquely by a line bundle $L$ on $Y$ and a smooth divisor $D\in |2L|$.
Set $\mathcal E:=\OO_Y\oplus L\inv$ and let $\Z_2$ act on $\mathcal E$  as multiplication by 1 on  $\OO_Y$ and multiplication by $-1$ on $L\inv$. To define on $\mathcal E$ an $\OO_Y$-algebra structure compatible with this $\Z_2$-action it suffices to give  a map $\mu\colon L^{-2}\to \OO_Y$: we take $\mu$ to be a section whose zero locus is $D$,  set $S:=\Spec E$ and let $\pi\colon S\to Y$ be the natural map. $S$ is easily seen to be smooth iff $D$ is. By construction, one has:
$$H^i( \OO_S)=H^i(\OO_Y)\oplus H^i(L\inv).$$
In particular, if $L$ is  nef and big  then by Kawamata--Viehweg vanishing $H^1(\OO_S)=H^1(\OO_Y)$, hence the induced map  $\Alb(S)\to \Alb(Y)$ is an isogeny.
In fact, it is actually an isomorphism: since $H^0(\Omega^1_S)=H^0(\Omega^1_Y)$, the induced $\Z_2$ action on $\Alb(S)$ is trivial. Since $D=2L$ is nef and big and effective, it is nonempty and therefore we may  choose a base point  $x_0\in S$ that is fixed by $\Z_2$. The Albanese map with base point $x_0$ is $\Z_2$-equivariant, hence it descends to a map $Y\to \Alb(S)$.  So by  the universal property there is a morphism $\Alb(Y)\to \Alb(S)$ which is the inverse of the morphism $\Alb(S)\to \Alb(Y)$ induced by $\pi$. 

A local computation gives the following pull back formula for the canonical divisor:
$$K_S=\pi^*(K_Y+L).$$
By this formula, if $K_Y+L$ is nef and big the surface $S$ is minimal of general type. 
The numerical invariants of $S$ are:
$$K^2_S=2(K_Y+L)^2,\quad  \chi(S)=2\chi(Y)+L(K_Y+L)/2.$$
Hence for ``large'' $L$, the ratio $K^2_S/\chi(S)$ tends to $4$.

If $Y$ is an abelian surface and $L$ is ample, one has $\albdim(S)=2$ and $K^2_S=4\chi(S)$.

\subsection{Examples}\label{ssec:examples}
The constructions of irregular surfaces of  \S \ref{ssec:constructions} can be combined to produce more sophisticated examples, as for instance in Example  \ref{ex:albdeg}. 
However   the computations of the numerical invariants  suggest that in infinite families of examples the ratio $K^2/\chi$ converges, so that it does not seem easy  to fill by these methods large areas of the admissible region for the invariants $K^2,\chi$.

Here we collect some existence results for irregular surfaces.
\medskip

(a) \underline{$\chi=1$.}\\
  As explained in \S \ref{ssec:invariants}, $\chi=1$ is the smallest possible value for a surface  of general type. Since $K^2\le 9\chi$ (\S \ref{ssec:invariants}),  in this case we have $K^2\le 9$, hence   by \eqref{eq:debarre} $q=p_g\le 4$.  To our knowledge, the only known example  of irregular surface of general type with $K^2=9$, $\chi=1$ has been recently constructed by Donald Cartwright (unpublished). It has $q=1$.

We recall briefly what is known about the classification of these surfaces for the possible values of $q$. \begin{itemize} 
\item[q=4:] $S$ is the product of two curves of genus 2 by a result of Beauville (cf. Theorem \ref{thm:2q-4}). So $K^2=8$ in this case. 
\item[q=3:] By \cite{HaconPardini} and \cite{Pirola} (cf. also \cite{CataneseCilibertoMLopes}) these surfaces belong to two families. They are either the symmetric product $S^2C$ of a curve of genus $C$ ($K^2=6$) or free $\Z_2$-quotients of a product of curves $C_1\times C_2$ where $g(C_1)=2$, $g(C_2)=3$ ($K^2=8$).
\item[q=2:] Surfaces with $p_g=q=2$ having an irrational pencil (hence in particular those with $\albdim(S)=1$) are classified in \cite{Zucconi}. They have either $K^2=4$ or $K^2=8$.

Let $(A, \Theta)$ be a principally polarized abelian surface $A$. A double cover $S\to A$ branched on a smooth curve of $|2\Theta|$ is a minimal surface of general type with $K^2=4$, $p_g=q=2$ and it has no irrational pencil iff $A$ is simple (cf. \S \ref{ssec:constructions}, (d)). In  \cite{CilibertoMLopes} it is proven that this is the only surface with $p_g=q=2$ and non birational bicanonical map that has no pencil of curves of genus 2. An example with $p_g=q=2$, $K^2=5$ and no irrational pencil is constructed in  \cite{ChenHacon}. 

\item[q=1:] For  $S$ a  minimal surface of general type with $p_g=q=1$, we denote  by $E$  the Albanese curve of $S$ and by $g$ the genus of the general fiber of the Albanese pencil $a\colon S\to E$.

The case $K^2=2$ is  classified in \cite{Catanese_g2}. These surfaces are constructed as follows. Let $E$ be an elliptic curve with origin $O$.  The map $E\times E\to E$ defined by
$(P,Q)\mapsto P+Q$ descends to a map $S^2E\to E$ whose fibres are smooth rational curves.
We denote by $F$ the algebraic equivalence class of a fibre of $S^2E\to E$. The curves
$\{P\}\times E$ and $E\times \{P\}$ map to curves $D_P\subset S^2E$ such that
$D_PF=D_P^2=1$. The curves $D_P$, $P\in A$, are algebraically equivalent and 
$h^0(D_P)=1$. We denote by $D$ the algebraic equivalence class of $D_P$. Clearly
$D$ and $F$ generate the N\'eron-Severi group  of $S^2E$.
All the surfaces $S$ are (minimal desingularizations of) double covers of $S^2E$ branched on a divisor $B$ numerically
equivalent to $6D-2F$ and with at most simple singularities.
The composite map $S\to S^2E\to E$ is the Albanese pencil of $X$ and its general fibre has genus 2.

The case $K^2=3$ is studied in \cite{CataneseCiliberto91} and \cite{CataneseCiliberto93}. One has either $g=2$ or $g=3$. If $g=2$, then $S$ is birationally a double cover of $S^2E$, while if $g=3$ $S$ is birational to a divisor in $S^3E$. For $K^2=4$, several components of the moduli space are constructed in \cite{Pignatelli} (all these examples have $g=2$). The papers \cite{Rito1}, \cite{Rito2}, \cite{Rito3} contain examples with $K^2=2,\dots 8$. 
The case in which $S$ is birational to a quotient $(C\times F)/G$, where $C$ and $F$  are curves and $G$ is a finite group is considered in \cite{CarnovalePolizzi}, \cite{MistrettaPolizzi}, \cite{Polizzi1}, \cite{Polizzi2}: when $(C\times F)/G$ has at most canonical singularities the surface $S$ has $K^2=8$, but there are also examples with $K^2=2,3,5$.

\end{itemize}
\medskip 

(b) \underline{The line $K^2=2\chi$.}\\
As  pointed out in Proposition \ref{prop:2chi}, for irregular surfaces the lower bound for the ratio $K^2/\chi$ is 2. Irregular surfaces attaining this lower bound were studied in \cite{Horikawa_genus2} and \cite{HorikawaV}. Their structure is fairly simple: they have $q=1$,  the  fibers of the Albanese pencil $a\colon S\to E$  have  genus 2 (cf. also Proposition \ref{prop:2chiq1}) and the quotient of the canonical model of $S$ by the hyperelliptic involution is a $\pp^1$-bundle over $E$.  The moduli space of these surfaces is studied in  \cite{HorikawaV}.

We just show here that for every integer $d>0$ there  exist a minimal irregular surface of general type with $K^2=2\chi$ and $\chi=d$.
In (a) above we have sketched the construction of such a surface $S$ with $K^2_S=2$, $\chi(S)=1$. Let $a\colon S\to E$ be the Albanese pencil and let $E'\to E$ be an unramified cover of degree $d$. Then the map  $S'\to S$ obtained from $E'\to E$ by taking base  change with $S\to E$ is a  connected \'etale cover and $S'$ is minimal of general type with $K^2=2d$, $\chi=d$. By construction $S'$ maps onto $E'$, hence $q(S')>0$. By Proposition \ref{prop:2chiq1} we have $q(S')=1$, hence $S'\to E'$, having connected fibers, is the Albanese pencil of $S'$.

Alternatively, here is a direct construction for $\chi$ even.  Let $Y=\pp^1\times E$, with $E$ an elliptic curve and let $L:=\OO_{\pp^1}(3)\btimes \OO_E(kO)$, where $k\ge 1$ is an integer and $O\in E$ is a point.
Let $D\in |2L|$ be a smooth curve and let $\pi\colon S\to Y$ be the double cover given by the relation $2L\equiv D$ (cf. \S \ref{ssec:constructions}, (d)). The surface is smooth, since $D$ is smooth, and it is minimal of general type since   $K_S=\pi^*(K_Y+L)=\pi^*(\OO_{\pp^1}(1)\btimes\OO_E(kO))$ is ample. The invariants are:
$$K^2=4k, \chi=2k.$$
One shows as above that $q(S)=1$ and $S\to E$ is the Albanese pencil.
\medskip

(c) \underline{Surfaces with an irrational pencil with general fiber of genus $g$.}\\
We use the same construction as in the previous case.  Let $g\ge 2$.  Take $Y=\pp^1\times E$ with $E$ an elliptic curve, $\De$ a divisor of positive degree of $E$ and set $L:=\OO_{\pp^1}(g+1)\btimes \OO_E(\De)$. Let $D\in |2L|$ be a smooth curve and $\pi\colon S\to Y$ the double cover given by the relation $2L\equiv D$. $S$ is smooth minimal of general type, with invariants:
 $$K^2_S=4(g-1)\deg \De, \quad\chi(S)=g\deg \De .$$
 The projection $Y\to E$ lifts to a pencil $S\to E$ of hyperelliptic curves of genus $g$.
 Here $K^2_S/\chi(S)$ is equal to $\frac{4(g-1)}{g}$,  the lowest possible value  (cf. Theorem \ref{thm:slope}).

(d) \underline{The line $K^2=9\chi$.}\\
By \cite[Thm. 2.1]{miyaoka} minimal surfaces of general type with $K^2=9\chi$ have ample canonical class. By Yau's results (\cite{yau}), surfaces with $K^2=9\chi$ and ample canonical class are quotients of the unit ball in $\C^2$ by a  discrete  subgroup. The existence of several examples has been shown using this description (cf. \cite[\S 9, Ch.VII]{bpv}).

Three examples have been constructed in \cite{Hirzebruch_lines}  as Galois covers of the plane branched on an arrangement of lines. 

For later use, we sketch here one of these  constructions. Let $P_1, \dots  P_4\in\pp^2$ be points in general positions and let  $L_1, \dots L_6$ be equations for the  lines through $P_1,\dots P_4$.  Let $X\to\pp^2$ be the normal finite cover corresponding to the  field inclusion $\C(\pp^2)\subset \C(\pp^2)((L_1/L_6)^{\frac1 5},\dots (L_5/L_6)^{\frac1 5})$. The cover $X\to \pp^2$ is abelian with Galois group $\Z_5^5$ and  one can show, for instance by the methods of \cite{ritaabel}, that $X$ is singular over $P_1,\dots P_4$ and that the cover $S\to \hat{\pp^2}$ obtained by blowing up $P_1,\dots P_4$ and taking base change and normalization is smooth.
The cover $S\to \hat \pp^2$ is branched of order 5 on the union $B$ of the exceptional curves and of the strict transforms  of the lines $L_j$. 
Hence the canonical class $K_S$ is numerically equivalent to the pull back of $\frac 1 5(9L-3(E_1+\dots +E_4))$, where $L$ is the pull back on $\hat \pp^2$ class of a line in $\pp^2$ and the $E_i$ are the exceptional curves of $\hat \pp^2\to \pp^2$. It follows that $K_S$  is ample and $K^2_S=3^2\cdot 5^4 $. 
 The divisor $B$ has 15 singular points, that are precisely the points of $\hat \pp^2$ whose preimage consists of $5^3$ points. Hence, denoting  by $e$ the topological Euler characteristic of a variety, we have:
$$c_2(S)=e(S)=5^5[e( \hat{\pp^2})-e(B)] +5^4[e(B)-15]+5^3\cdot 15=3\cdot 5^4.$$
Thus  $S$ satisfies $K^2=3c_2$ or, equivalently, $K^2=9\chi$. In \cite{ishida} it is shown that the irregularity $q(S)$ is equal to 30. To prove that $\albdim S=2$, by the discussion in \S \ref{ssec:pencils} it is enough to show  that $S$ has more than one irrational pencil. The surface $\hat \pp^2$ has 5 pencils of smooth rational curves, induced by the systems $h_i$ of lines through  each  of the $P_i$, $i=1,\dots 4$,  and by the system $h_5$ of conics through $P_1,\dots P_4$. For $i=1,\dots 5$, denote by $f_i\colon S\to B_i$ the pencil induced by $h_i$ and denote by $F_i$ the general fiber of $f_i$.
For $i=1,\dots 5$, the subgroup $H_i<\Z_5^5$ that maps $F_i$ to itself has order $5^3$ and the restricted cover $F_i\to\pp^1$ is branched at 4 points. So  $F_i$ has genus $76$  by the Hurwitz formula.
There is  a commutative diagram:
\begin{equation}\label{diag:Bi}
\begin{CD}
S@>>>\hat \pp^2\\
@V{f_i}VV @VVV\\
B_i@>>>\pp^1.
\end{CD}
\end{equation}
where the map $B_i\to \pp^1$ is an abelian cover  with Galois group $\Z_5^5/H_i\iso \Z_5^2$. The branch points of $B_i\to\pp^1$ correspond to the multiple fibres of $f_i$, hence there are 3 of them and $B_i$ has genus $6$ by the Hurwitz formula. One computes  $F_iF_j=5$ for $i\ne j$, hence the pencils $F_i$ are all distinct. 

Since the group $H_i\cap H_j$ acts faithfully on the set $F_i\cap F_j$ for $F_i$, $F_j$ general, it follows that $H_i\cap H_j$ has order 5 and $H_i+H_j=\Z_5^5$. We use this remark  to show that $H^0(\Omega^1_S)=\oplus_{i=1}^5 V_i$, where $V_i:=f_i^*H^0(\omega_{B_i})$, and therefore that $\Alb(S)$ is isogenous to $J(B_1)\times\dots \times J(B_5)$. Since $q(S)=30$ and $\dim V_i=6$, it is enough to show that the $V_i$ are in direct sum in $H^0(\Omega^1_S)$.  Each subspace $V_i$ decomposes under the action of $\Z_5^5$ as a direct sum of eigenspaces relative to some subset of the group of characters $\Hom(\Z_5^5,\C^*)$. Notice that the trivial character never occurs in the decomposition since $B_i/\Z_5^5=\pp^1$. Diagram \eqref{diag:Bi} shows that the characters occurring in the decomposition of $V_i$ belong to $H_i^{\perp}$. Since $H_i+H_j=\Z_5^5$ for $i\ne j$, it follows that each   character  occurs in the decompositions of  at most one the $V_i$,  and therefore that there is no linear relation among the $V_i$.
 \medskip

(e) \underline{The ratio $K^2/\chi$.}

In  \cite{sommese} Sommese shows that the ratios $K^2(S)/\chi(S)$, for $S$ a minimal surface of general type,  form a dense set in the admissible interval $[2,9]$. His construction can be used to prove:
\begin{prop}\label{prop:density}
\begin{enumerate}
\item The ratios $K^2_S/\chi(S)$, as  $S$ ranges among  surfaces with $\albdim(S)=1$, are dense in the interval $[2,8]$.
\item  The ratios $K^2_S/\chi(S)$, as  $S$ ranges among  surfaces  with $\albdim(S)=2$, are dense in the interval $[4,9]$.
\end{enumerate}
\end{prop}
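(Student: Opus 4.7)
I plan to follow Sommese's density construction \cite{sommese}, now carrying along the Albanese dimension throughout. In each case, for every rational $r$ in the asserted interval I will produce a minimal surface of general type with $K^2/\chi$ arbitrarily close to $r$ and the prescribed Albanese dimension, built as a double cover (\S\ref{ssec:constructions}(d)) of a suitable base.

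For (i), take $Y=\pp^1\times B$ with $B$ a smooth curve of genus $q\ge 1$, and let $F,G\in\NS(Y)$ denote the classes of the two rulings, so $F^2=G^2=0$, $F\cdot G=1$ and $K_Y=-2G+(2q-2)F$. For integers $a\ge 3$ and $b\ge 1$, the class $L:=aG+bF$ is ample and Bertini produces a smooth $D\in|2L|$. The associated double cover $\pi\colon S\to Y$ is minimal of general type with
\[
K^2_S=4(a-2)\bigl(b+2(q-1)\bigr),\qquad \chi(S)=b(a-1)+(a-2)(q-1),
\]
and, since $L$ is big and nef, $\pi_*\colon\Alb(S)\to\Alb(Y)=J(B)$ is an isomorphism. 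Hence the Albanese map of $S$ factors through the projection $Y\to B$, forcing $\albdim(S)=1$. Specializing $a=3$ and, for $q\ge 2$, introducing $x:=b/(q-1)$, the ratio becomes
\[
\frac{K^2_S}{\chi(S)}=\frac{4(x+2)}{2x+1},
\]
a strictly decreasing continuous function of $x\in(0,\infty)$ with image $(2,8)$. Since $\Q_{>0}$ is dense in $(0,\infty)$, the ratios are dense in $(2,8)$; the endpoint $2$ is attained exactly (take $q=1$, any $b\ge 1$), and $8$ is approached by $b=1$, $q\to\infty$. This proves (i).

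For (ii), replace $Y$ by $C\times E$ with $g(C)\ge 2$ and $E$ elliptic. Now $\Alb(Y)=J(C)\times E$ contains $Y$ as a non-degenerate surface, so any double cover $S\to Y$ as above has $\albdim(S)=2$. With $L=aG+bF$ the analogous computation yields
\[
\frac{K^2_S}{\chi(S)}=\frac{4\bigl(2(g-1)+b\bigr)}{(g-1)+b},
\]
which is dense in $(4,8)$ as $(g-1)/b$ ranges over $\Q_{>0}$. The endpoint $4$ is realised exactly by a smooth double cover of a simple abelian surface branched on an ample smooth curve (\S\ref{ssec:constructions}(d)), and $8$ by a product of two curves of genus $\ge 2$ (\S\ref{ssec:examples}(a)); both have $\albdim=2$.

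To cover $(8,9)$ with $\albdim=2$, apply the double cover construction with base a Hirzebruch-type ball-quotient surface $S_0$ from \S\ref{ssec:examples}(d), for which $K^2_{S_0}=9\chi(S_0)$, $\albdim(S_0)=2$ and $\rho(S_0)$ is large. For a nef and big $L\in\Pic(S_0)$, writing $\alpha:=L^2/K^2_{S_0}$ and $\beta:=L\cdot K_{S_0}/K^2_{S_0}$, the double cover yields a surface $S$ with $\albdim(S)=2$ and
\[
\frac{K^2_S}{\chi(S)}=\frac{36(1+\alpha+2\beta)}{4+9(\alpha+\beta)},
\]
which tends to $9$ as $(\alpha,\beta)\to(0,0)$. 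By ranging $S_0$ over Hirzebruch covers with arbitrarily large $K^2_{S_0}$, one can realise pairs $(\alpha,\beta)$ arbitrarily close to $0$ in the positive quadrant, producing ratios dense in a left-hand neighbourhood of $9$. Combined with the earlier ranges, this gives density in $[4,9]$, completing (ii).

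The main technical obstacle is the $(8,9)$ step: one must check that sufficiently many classes $L\in\Pic(S_0)$, for a sequence of Hirzebruch covers with growing $K^2_{S_0}$, can be taken nef and big with $|2L|$ containing a smooth member and $K_{S_0}+L$ ample; this requires a uniform handle on the Picard geometry of line-arrangement covers, which is standard but delicate. The other ranges are straightforward provided one is careful to verify that the double covers actually preserve $\albdim$, which in each case follows from \S\ref{ssec:constructions}(d) together with the universal property of the Albanese map.
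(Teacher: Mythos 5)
Your treatment of the ranges $[2,8]$ in (i) and $[4,8]$ in (ii) is correct, and it takes a genuinely different (and more self-contained) route than the paper: you build the surfaces directly as double covers of $\pp^1\times B$ and of $C\times E$ with explicitly chosen $L$, whereas the paper starts from a single surface $X$ with an irrational pencil $f\colon X\to B$ and produces $S_{d,k}$ by an \'etale base change $B'\to B$ of degree $d$ followed by a double cover branched on $2k$ fibres, so that the ratio interpolates between $K^2_X/\chi(X)$ and $8$. Your computations for these ranges check out, and the verification that $\albdim$ is preserved via $\Alb(S)\cong\Alb(Y)$ for $L$ nef and big is fine.

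The genuine gap is the interval $(8,9)$. Your scheme needs, on a sequence of ball quotients $S_0$ with $K^2_{S_0}\to\infty$, nef and big classes $L$ with $\alpha=L^2/K^2_{S_0}$ and $\beta=LK_{S_0}/K^2_{S_0}$ realising a dense set of pairs near $(0,0)$. This is not just ``standard but delicate'': since $L$ is big you have $L^2\ge 1$, so $\alpha\ge 1/K^2_{S_0}$ forces $K^2_{S_0}\to\infty$, and the only systematic source of large-$K^2$ ball quotients here is \'etale covers of the Hirzebruch examples --- but pulling back a class $L$ under an \'etale cover leaves both $\alpha$ and $\beta$ unchanged, so pullback classes give you nothing new, and you have no control over the non-pullback part of the N\'eron--Severi group of these covers. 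You have asserted, not proved, the key existence statement. The paper sidesteps this entirely by dropping bigness: it uses one of the five irrational pencils $f_i\colon S_0\to B_i$ on the Hirzebruch surface, takes \'etale covers pulled back from $B_i$, and branches the double cover on $2k$ disjoint smooth fibres, i.e.\ $L=kF$ with $L^2=0$ (so $\alpha=0$ exactly), with $\beta$ made arbitrarily small and densely distributed by letting $d,k$ vary; smoothness of the branch divisor is automatic, minimality follows since $K+kF$ is nef and big, and $\albdim=2$ holds because $S_{d,k}$ dominates a surface of Albanese dimension $2$. To complete your proof you should replace your $(8,9)$ step by this fibre-branching construction (or otherwise actually produce the required classes $L$).
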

\begin{proof}
Let $X$ be a minimal surface of general type and let $f\colon X\to B$ be an irrational pencil. Denote by $g\ge 2$ the genus of a general fibre of $f$ and write $K^2$, $\chi$ for $K^2_X$, $\chi(X)$.
 Given positive integers $d, k$, we construct a surface  $S_{d,k}$  as follows:

(1) we  take an unramified degree $d$ cover $B'\to B$ and let $Y_d\to X$ be the cover obtained by taking base change with $f\colon X\to B$,

(2) we take a double cover $B''\to B'$ branched on $2k>0$  general  points and let $S_{d,k}\to  Y_d$ be the cover obtained from $B''\to B'$ by base change. 

The   \'etale cover $Y_d\to X$ is connected, hence  $Y_d$ is a  minimal surface of general type with $K^2_{Y_d}=dK^2$ and $\chi(Y_d)=d\chi$. By   \S \ref{ssec:constructions}, (d), the surface $S_{d, k}$ is smooth, since the branch points of $B''\to B'$ are general, and it is minimal  of general type since $K_{S_{d,k}}$ is numerically the pull back of $K_{Y_d}+kF$, where $F$ is a fibre of $Y_d\to B'$ ($F$ is the same as  the general fiber of $X\to B$).
By the formulae for double covers  we have:
\begin{equation}
\frac{K^2_{S_{d,k}}}{\chi(S_{d,k})}=\frac{2dK^2+8k(g-1)}{2d\chi+k(g-1)}=\frac{K^2}{\chi}+\left(8-\frac{K^2}{\chi}\right)\frac{k(g-1)}{2d\chi+k(g-1)}.
\end{equation}
The formula above shows that the ratio $\frac{K^2_{S_{d,k}}}{\chi(S_{d,k})}$ is in the interval $[8, K^2/\chi]$ if $K^2\ge 8\chi$ and it is in $[K^2/\chi, 8]$ otherwise. It is not difficult to show (cf. \cite{sommese})  that as $d,k$ vary one obtains a dense set in the appropriate interval.

Now to prove the statement it is enough to apply the construction to suitable surfaces. 
If one takes $X$ to be the surface with $K^2=9\chi$ described in (c) and $f\colon X\to B$ one of the 5 irrational pencils of $X$, then the surfaces $S_{d,k}$ have Albanese dimension 2 and the ratios of their  numerical invariants are dense in $[8,9]$.

If one takes $X$ to be a double cover of $E\times E$  branched on a smooth ample curve as in \S \ref{ssec:constructions}, (d)  and  $f\colon X\to E$ one of  the induced pencil, then the surfaces $S_{d,k}$ have Albanese dimension 2 and the ratio of their numerical invariants are dense in $[4,8]$.

Finally, we take $X$ an irregular surface with $K^2=2\chi$. Since   $q(X)=1$  (cf. (c) above),  we can take  $f\colon X\to B$ to be  the Albanese pencil. In this case the ratios of the numerical  invariants  of the  surfaces $S_{d,k}$ are dense in the interval $[2,8]$. To complete the proof we show that the surfaces $S_{d,k}$ have Albanese dimension 1. The surfaces $Y_d$ satisfy $K^2=2\chi$, hence they also have $q=1$. The induced pencil $S_{d,k}\to B''$ has genus $k+1$, so we need to show that the irregularity of $S_{d,k}$ is equal to $k+1$. Denote by $L$ the line bundle of $Y_d$ associated to the double cover $S_{d,k}\to Y_d$. By construction $L=\OO_{Y_d}(F_1+\dots+F_k)$, where  the $F_i$ are fibers of the Albanese pencil,  and if $k>1$ we can take the $F_i$ smooth and distinct. 
We have $q(S_{d,k})=q(Y_d)+h^1(L\inv)=1+h^1(L\inv)$. Finally, $h^1(L\inv)=k$ can be proven using the restriction sequence:
$$0\to L\inv \to\OO_{Y_d}\to\OO_{F_1+\dots+F_k}\to 0.$$ 
(Notice that the map $H^1(\OO_{Y_d})\to H^1(\OO_{F_1+\dots+F_k})$ is 0, since the curves  $F_i$ are contracted by the Albanese map). 
\end{proof} 
\begin{remark}\label{rem:pencil}
 Due to the method of proof, all the surfaces constructed in the proof of Proposition \ref{prop:density} have an irrational pencil. The examples in \S \ref{ssec:constructions}  show that, for instance,  $4, 6, \frac{48}{7}, 8$ are accumulation points for the ratio $K^2/\chi$ of irregular surfaces without irrational pencils.  We have no further  information on the distribution of the ratios $K^2/\chi$ for these surfaces.

\end{remark}
\section{The Castelnuovo--De Franchis inequality}\label{sec:CDF}

 Let $S$ be an irregular minimal surface of general type. Set $V:=H^0(\Omega^1_S)$ and denote by  $w\colon \wedge^2 V \to H^0(\omega_S)$ the natural map.
 If $f\colon S\to B$ is a pencil of genus $b\ge 2$,  then $f^*H^0(\Omega^1_B)$ is a subspace of $V$ such that $\wedge^2f^*H^0(\Omega^1_B)$ is contained in $\ker w$ (cf. \S \ref{ssec:pencils}).  Conversely if $p_g=h^0(\omega_S)<2q-3$,  the intersection of  $\ker w$ with the cone of decomposable elements is non zero and by  \ref{thm:CDF}, $S$ has  a pencil of genus $b\ge 2$.  Thus:

  \begin{thm}{\rm  (Castelnuovo-De Franchis inequality)}
  Let $S$ be an irregular  surface of general type having no irrational pencil of genus $b\ge 2$. Then $p_g\geq 2q-3$. 
  \end{thm}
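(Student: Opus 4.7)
The plan is to prove the contrapositive: assuming $p_g<2q-3$, we will exhibit an irrational pencil of genus $\ge 2$. Set $V:=H^0(\Omega^1_S)$, so $\dim V=q$. From $\dim w(\wedge^2 V)\le p_g\le 2q-4$ a direct dimension count gives
\[
\dim \ker w \;\ge\; \binom{q}{2}-(2q-4) \;=\; \binom{q-2}{2}+1.
\]

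Next, I will invoke the classical linear-algebra fact that \emph{every linear subspace of $\wedge^2 V$ of dimension strictly greater than $\binom{q-2}{2}$ contains a nonzero decomposable $2$-vector.} Granting this, the above bound forces $\ker w$ to contain some $\al\wedge\be\ne 0$ with $\al,\be\in V$ linearly independent. Then $\al\wedge\be=0$ in $H^0(\omega_S)$, and Theorem \ref{thm:CDF} supplies a pencil of genus $\ge 2$, contradicting the hypothesis.

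The main obstacle is the linear-algebra fact. In projective terms, the Pl\"ucker embedding realizes $G(2,V)\subset\pp(\wedge^2 V)$ as an irreducible subvariety of dimension $2q-4$ and codimension $\binom{q-2}{2}$, and the fact asserts that every linear subspace of $\pp(\wedge^2 V)$ of dimension $\ge \binom{q-2}{2}$ meets $G(2,V)$. This extremal incidence property of the Grassmannian in its Pl\"ucker embedding is a standard statement; I would prove it by induction on $q$. The base case $q\le 3$ is immediate, because $\wedge^4 V=0$ and hence every $2$-vector is decomposable. For the inductive step I pick a hyperplane $V'\subset V$ and a complementary vector $e\in V\setminus V'$ so that $\wedge^2 V=\wedge^2 V'\oplus(e\wedge V')$; writing $K\subset\wedge^2 V$ as the candidate subspace, I compare $\dim K$ to $\dim(K\cap\wedge^2 V')$ via the projection $K\to e\wedge V'\iso V'$, and use the Pl\"ucker quadric $\omega\wedge\omega=0$ in $\wedge^4 V$ to trim the image of this projection by the extra two dimensions that the naive count leaves open, thereby closing the induction.
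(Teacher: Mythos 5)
Your proof is correct and follows exactly the paper's argument: the dimension count $\dim\ker w\ge\binom{q}{2}-(2q-4)=\binom{q-2}{2}+1$ forces $\ker w$ to meet the cone of decomposable $2$-vectors (whose codimension in $\wedge^2V$ is $\binom{q-2}{2}$) in a nonzero element, and Castelnuovo--De Franchis then produces the pencil. The only difference is that you sketch an inductive proof of the incidence fact, which the paper simply asserts; it is in any case an immediate consequence of the projective dimension theorem applied to $G(2,V)\subset\pp(\wedge^2V)$, so the somewhat loose final step of your induction is not needed.
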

  
 In fact, using this theorem and positivity properties of the relative canonical bundle of a fibration (cf. Theorem \ref{thm:arakelov}),  Beauville showed:
 \begin{thm}[\cite{Beauville_appendix}]\label{thm:2q-4}
 Let $S$ be a minimal surface of general type. Then $p_g\geq 2q-4$ and if equality holds then $S$ is the product of a  curve of genus $2$ and a curve of genus $q-2\geq 2$.
 \end{thm}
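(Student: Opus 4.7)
The plan is to split on whether $S$ admits an irrational pencil of genus at least $2$. If not, the Castelnuovo--De Franchis inequality immediately gives $p_g \geq 2q - 3 > 2q - 4$ with strict inequality, so equality never occurs in this case. Otherwise, fix such a pencil $f\colon S \to B$, set $b := g(B) \geq 2$, let $F$ be a general fiber of genus $g \geq 2$, and put $\mathcal{E} := f_*\omega_{S/B}$, a rank-$g$ vector bundle on $B$.

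The key computation is to rewrite $p_g$ and $q$ in terms of $\mathcal{E}$. The projection formula gives $p_g(S) = h^0(B, \omega_B \otimes \mathcal{E})$, and the Leray spectral sequence applied to $\OO_S$, combined with the relative duality identification $R^1 f_* \OO_S \cong \mathcal{E}^\vee$, gives $q(S) = b + h^0(B, \mathcal{E}^\vee)$. Invoking the Fujita decomposition (a strengthening of the positivity results of Theorem \ref{thm:arakelov}), write $\mathcal{E} = \mathcal{U} \oplus \mathcal{A}$, where $\mathcal{U}$ is unitary flat of rank $u$ and $\mathcal{A}$ is nef of rank $g-u$ with $h^0(\mathcal{A}^\vee) = 0$ and $\deg \mathcal{A} > 0$ whenever $\mathcal{A} \neq 0$. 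Setting $r := h^0(\mathcal{U}^\vee) \leq u$, with equality iff $\mathcal{U} \cong \OO_B^u$, a Riemann--Roch computation on each summand (using Serre duality to evaluate $h^1$) yields $p_g = g(b-1) + r + \deg \mathcal{A}$, so
\[
p_g - 2q + 4 \;=\; (g-2)(b-1) + 2 - r + \deg \mathcal{A} \;\geq\; (g-2)(b-2) + \deg \mathcal{A} \;\geq\; 0,
\]
where the first inequality uses $r \leq g$. This proves $p_g \geq 2q-4$. Equality forces $\mathcal{A} = 0$, $\mathcal{U} \cong \OO_B^g$ (so $\mathcal{E}$ is trivial), and $(g-2)(b-2)=0$, so either $b = 2, g = q-2$ or $g = 2, b = q-2$.

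The main obstacle is upgrading triviality of $\mathcal{E}$ to the actual product structure $S \cong B \times F$. Since $\deg \mathcal{E} = 0$, the isotriviality criterion of Arakelov (via Koll\'ar--Viehweg) produces a finite \'etale Galois cover $\tilde B \to B$ with group $G$ and a smooth curve $F$ of genus $g$ such that $S \times_B \tilde B \cong \tilde B \times F$ with $G$ acting diagonally. The monodromy of this isotrivial family factors through $G \to \Aut(F)$, and its induced action on $H^0(F, \omega_F)$ is precisely the monodromy of $\mathcal{U}$, which we have just seen to be trivial; complex conjugation extends this triviality to $H^{0,1}(F)$ and hence to all of $H^1(F, \C)$. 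The injectivity of the Torelli map $\Aut(F) \hookrightarrow \operatorname{Sp}(H^1(F, \Z))$ (valid since $g(F) \geq 2$) forces $G$ to act trivially on $F$, so $S \cong (\tilde B/G) \times F = B \times F$, a product of smooth curves of genera $2$ and $q-2$, as required.
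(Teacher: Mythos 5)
Your proof is correct and follows exactly the route the paper attributes to Beauville for this theorem (the paper itself only cites \cite{Beauville_appendix} and does not reproduce the argument): the Castelnuovo--De Franchis inequality disposes of the case with no irrational pencil of genus $\ge 2$, and positivity of $f_*\omega_f$ (Theorems \ref{thm:arakelov} and \ref{thm:fujita}) handles the rest, with $\chi_f=0$ forcing a smooth isotrivial fibration and the trivial monodromy on $H^1(F,\Z)$ forcing an actual product. One minor remark: the Fujita decomposition is more than you need --- plain nefness of $\mathcal E=f_*\omega_f$ already gives $h^0(\mathcal E^{\vee})\le g$ with equality iff $\mathcal E\cong\OO_B^{\oplus g}$, which together with Riemann--Roch and $\deg\mathcal E=\chi_f\ge 0$ yields your displayed inequality directly.
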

So  surfaces satisfying $p_g=2q-4$ have a particularly simple structure and it is natural to ask what are the  irregular surfaces satisfying $p_g=2q-3$.  Those having an irrational pencil have again a simple structure, as explained in the following theorem, which  was proven in \cite{CataneseCilibertoMLopes} for $q=3$, in \cite{bnp} for $q=4$ and finally for $q\ge 5$ in \cite{RitaMarg_advances}.

\begin{thm}\label{irrpencil} 

 Let $S$ be a minimal surface of general type satisfying $p_g=2q-3$. If $S$ has an irrational pencil of genus $b\ge 2$, then
there are the following possibilities for $S$:
\begin{enumerate}
\item $S=(C\times F)/\Z_2$, where $C$ and $F$ are genus 3 curves with a free involution ($q=4$);
\item $S$ is the product of two curves of genus 3 ($q=6$);
\item $S=(C\times F)/\Z_2$, where $C$  is a curve of  genus $2q-3$ with a free action of $\Z_2$, $F$  is a curve of genus  2 with a $\Z_2$-action such that $F/\Z_2$ has genus 1 and $\Z_2$ acts diagonally on $C\times F$ ($q\ge 3$). 

\end{enumerate}
In particular $K_S^2=8\chi$.

  \end{thm}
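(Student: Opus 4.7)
Let $f\colon S\to B$ be the given pencil of genus $b\ge 2$, with general fiber $F$ of genus $g$, and set $V:=H^0(\Omega^1_S)$, $W:=f^*H^0(\omega_B)\subset V$, so $\dim W=b$. The plan is to combine dimension counting on the wedge map $w\colon \wedge^2V\to H^0(\omega_S)$ with positivity properties of the fibration $f$ to force $S$ to be (birational to a quotient of) a product of curves, and then to classify that quotient by numerical matching.

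First, I would do the bookkeeping of $w$. The hypothesis $p_g=2q-3$ gives
\[
\dim\ker w\;\ge\;\binom{q}{2}-(2q-3)\;=\;\tfrac12(q-2)(q-3),
\]
while $\wedge^2W\subset\ker w$ contributes dimension $\binom{b}{2}$. Assuming $b$ chosen maximal among irrational pencils, I would look for decomposable elements in $\ker w$ outside $\wedge^2 W$: a nonzero $\al\wedge\be\in\ker w$ with $\al,\be$ not both in $W$ yields, by Theorem \ref{thm:CDF}, a second pencil $h\colon S\to B'$ of genus $\ge 2$, and the analysis of $\fie:=f\times h$ in \S\ref{ssec:pencils} decides whether $\fie(S)$ is a curve (giving a single larger pencil, contradicting maximality) or a surface (providing the product-like structure we want).

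Second, I would exploit the numerical invariants of $f$. Writing
\[
K_S^2=8(g-1)(b-1)+K_{S/B}^2,\qquad \chi(S)=(g-1)(b-1)+\deg f_*\omega_{S/B},
\]
Fujita's semipositivity and Arakelov's inequality (Theorem \ref{thm:arakelov}) bound $\deg f_*\omega_{S/B}$ from below in terms of the non-trivial part of this rank-$g$ bundle on $B$. The extremal condition $p_g=2q-3$ leaves enough global $1$-forms of $S$ outside $W$ to trivialize all but a tightly controlled piece of $f_*\omega_{S/B}$. Emulating Beauville's proof of Theorem \ref{thm:2q-4}, such maximal triviality should force $f$ to be isotrivial: after an \'etale base change $C\to B$, the pullback fibration splits as $C\times F\to C$, and $S$ is recovered as $(C\times F)/G$ with $G$ the deck group acting diagonally.

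Third, once $S=(C\times F)/G$ is identified, I would read off $q=g(C/G)+g(F/G)$, $\chi(S)=(g(C)-1)(g(F)-1)/|G|$ and $K_S^2=8\chi(S)$ (the latter being inherited from $C\times F$ since the quotient is \'etale), and match these against $p_g=2q-3$. A case-by-case inspection in $(|G|,g(C),g(F))$ should leave exactly the three listed possibilities: $G$ trivial with $g(C)=g(F)=3$ (case (ii), $q=6$); $|G|=2$ with two genus $3$ covers of genus $2$ bases (case (i), $q=4$); and $|G|=2$ of the hyperelliptic/elliptic quotient type (case (iii), $q\ge 3$).

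The main obstacle is the passage from subspace bookkeeping in $V$ to a geometric isotriviality of $f$. The Castelnuovo--De Franchis-style arguments alone only locate subspaces and auxiliary pencils; upgrading them to a splitting of the fibration after \'etale base change requires either a clean Beauville/Arakelov input (comfortable when $q$ is large, since $\dim\ker w$ grows quadratically) or considerably more delicate paracanonical and bicanonical methods when $q=3,4$, which is presumably why the three cited proofs for $q=3$, $q=4$ and $q\ge 5$ are carried out separately.
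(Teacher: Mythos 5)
The paper offers no proof of this theorem: it is quoted as a combination of results from \cite{CataneseCilibertoMLopes} ($q=3$), \cite{bnp} ($q=4$) and \cite{RitaMarg_advances} ($q\ge 5$), so there is no in-text argument to measure your sketch against. Judged on its own terms, your outline points in a reasonable general direction but has two genuine gaps, one of which is fatal to the first step as written. The dimension count in Step 1 does not produce a decomposable element of $\ker w$: the cone of decomposables has dimension $2(q-2)$, and
$\tfrac12(q-2)(q-3)+2(q-2)=\binom{q}{2}-2<\binom{q}{2}$,
so a kernel of dimension $\tfrac12(q-2)(q-3)$ need not meet it at all --- this deficit of $2$ is precisely why the Castelnuovo--De Franchis inequality stops at $p_g\ge 2q-3$ rather than $2q-2$. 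Worse, in case (iii) of the statement one computes directly (decomposing $H^0(\omega_C)$ and $H^0(\omega_F)$ into $\Z_2$-eigenspaces) that $\ker w=\wedge^2W$ exactly, where $W$ is the pullback from $C/\Z_2$: there is no second pencil of genus $\ge 2$ to be found, because the second ruling of the product structure is an \emph{elliptic} pencil, which Theorem \ref{thm:CDF} cannot detect. So the mechanism you propose for producing the product structure fails on one of the three families you are trying to characterize.

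The second gap is the one you flag yourself, but it is larger than you suggest: ``emulating Beauville's proof of Theorem \ref{thm:2q-4}'' does not carry over. At $p_g=2q-4$ the numerology forces $\chi_f=0$ on the nose, and Theorem \ref{thm:fujita}(ii) immediately gives smooth isotriviality; at $p_g=2q-3$ the same bookkeeping leaves room for $\chi_f>0$, and closing that gap is the entire content of the three cited papers (paracanonical and bicanonical analysis for $q=3$, Hodge-theoretic index arguments for $q=4$, a separate delicate study for $q\ge5$). Finally, even granting $S$ birational to $(C\times F)/G$, your Step 3 presupposes that the action is free (otherwise the minimal resolution has $K^2<8\chi$ and the formula $\chi=(g(C)-1)(g(F)-1)/|G|$ fails), and ruling out non-free actions and larger groups $G$ is itself part of what must be proved, since the conclusion $K^2_S=8\chi$ is part of the statement. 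In short: the skeleton (locate a product-like structure, then match numerics) is right, but the two load-bearing steps --- detecting the second, possibly elliptic, ruling and proving isotriviality --- are exactly where the known proofs do their work, and your sketch supplies no mechanism for either.
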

  
  So the main issue  is  to study the case when $S$ has no irrational pencil. Various numerical restrictions on the invariants have been obtained.  For instance if $q\ge 5$, then (cf. (\cite{RitaMarg_advances}):
  \begin{itemize}
   \item  $K_S^2\geq 7\chi(S)-1$;
 \item if $K_S^2<8\chi(S)$, then  $|K_S|$ has fixed components and the degree of the canonical map is 1 or 3;
 \item  if $q(S)\geq 7$  and  $K^2<8\chi(S)-6$, then the canonical map is birational.
 \end{itemize}

 However, it is  hard to say whether these results are sharp, since the only known example of surface with $p_g=2q-3$ and no irrational pencil is the symmetric product of a general curve of genus 3. For low values of $q$ we have the following:
  \begin{itemize}
  \item if $q=3$, then $S$ is the symmetric product of a curve of genus 3 (cf. \S \ref{ssec:examples}, (a));
  \item if $q=4$, then $K^2=16,17$ (cf. \cite{bnp}, \cite{PirolaCausin});
  \item if $q=5$, there exists no such surface (\cite{Pirola_q5}). 
  \end{itemize}

\begin{remark} Recently  Theorem  \ref{thm:CDF} has been generalized  by Pareschi and Popa  to the case of Ka\"hler manifolds of arbitrary dimension:
\begin{thm}[\cite{PareschiPopa}]
Let $X$ be a compact K\"{a}hler manifold  with  $\dim X=\albdim X=n$. If there exists no surjective morphism    $X\to Z$ with $Z$ a normal analytic  variety such that $0<\dim Z=\albdim Z<min\{n, q(Z)\}$, then:
$$\chi(\omega_X)\ge q(X)-n.$$
\end{thm}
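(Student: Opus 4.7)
The plan is to analyse the pushforward $\mathcal{F} := a_*\omega_X$ along the Albanese morphism $a\colon X \to A := \Alb(X)$ via generic vanishing. Since $\albdim X = n$, the map $a$ is generically finite onto its image, so $R^i a_*\omega_X = 0$ for $i > 0$ by Grauert--Riemenschneider (or Takegoshi in the K\"ahler setting), and hence $\chi(\omega_X) = \chi(A, \mathcal{F})$. By Hacon's generic vanishing theorem, extended to compact K\"ahler manifolds by Pareschi--Popa and Wang, $\mathcal{F}$ is a GV-sheaf on $A$, meaning that the cohomological support loci
\[
V^i(\mathcal{F}) := \{P \in \Pic^0(A) : h^i(A, \mathcal{F} \otimes P) > 0\}
\]
satisfy $\mathrm{codim}\, V^i(\mathcal{F}) \ge i$. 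This already gives the Hacon--Koll\'ar bound $\chi(\omega_X) \ge 0$, which is sharp in the range $q(X) = n$; the task is to extract the additional $q(X) - n$ units from the geometric hypothesis.

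Next I would invoke the Green--Lazarsfeld--Simpson structure theorem in its K\"ahler version: every positive--dimensional irreducible component of $V^i(\mathcal{F})$ is a translate of a subtorus $\widehat{B} \subset \Pic^0(A)$, and such subtori correspond to surjective morphisms $f\colon X \to Z$ onto normal analytic varieties $Z$ with $\Alb(Z)$ isogenous to $B$, $\albdim Z = \dim Z$, and $q(Z) = \dim B$. The hypothesis of the theorem excludes precisely the $f$ with $0 < \dim Z < \min\{n, q(Z)\}$, so every admissible positive--dimensional subtorus must satisfy $\dim Z \in \{0, n\}$ or $\dim Z = q(Z) = \dim B$; in the last case $Z$ itself has maximal Albanese dimension equal to its irregularity, and the inequality on $Z$ reduces to the baseline $\chi(\omega_Z) \ge 0$.

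For the quantitative conclusion I would apply the Fourier--Mukai derivative complex of Pareschi--Popa over $H^1(X, \mathcal{O}_X)$: the deficit $q(X) - n - \chi(\omega_X)$ is governed by the codimensions and supports of the $V^i(\mathcal{F})$. Restricting $\mathcal{F}$ to the general fibre of an admissible fibration $f\colon X \to Z$ and inducting on $n$, one accumulates the missing units, which under the theorem's hypothesis total at least $q(X) - n$. The main obstacle is this final combinatorial bookkeeping: one must check that each admissible subtorus $\widehat{B}$ contributes exactly the correct amount to the Euler--characteristic inequality without cancellations, and that the induction terminates cleanly. In the K\"ahler category this is further complicated by the need for Simpson's theorem to promote positive--dimensional components of cohomology jump loci to genuine holomorphic fibrations, and by the reformulation of the BGG/derivative--complex machinery without recourse to projective techniques; these ingredients, rather than any single analytic step, form the technical heart of the Pareschi--Popa proof.
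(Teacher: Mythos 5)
The paper does not prove this statement: it is quoted verbatim as a theorem of Pareschi and Popa, with \cite{PareschiPopa} as the reference and no argument supplied. So there is no internal proof to compare against, and your sketch has to be judged on its own terms and against the published Pareschi--Popa argument.

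Your opening moves are sound: $a\colon X\to\Alb(X)$ is generically finite since $\albdim X=n$, Grauert--Riemenschneider/Takegoshi gives $R^ia_*\omega_X=0$ for $i>0$ and hence $\chi(\omega_X)=\chi(a_*\omega_X)$, the generic vanishing theorem gives $\chi(\omega_X)\ge 0$, and the Green--Lazarsfeld--Simpson structure of the loci $V^i(\omega_X)$ is indeed the device that converts the hypothesis on morphisms $X\to Z$ into a statement about positive-dimensional components of those loci. But the entire quantitative content of the theorem --- getting from $\chi\ge 0$ to $\chi\ge q(X)-n$ --- lives in the step you leave open, and the route you propose for it does not work. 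By your own account, the admissible fibrations with $\dim Z=q(Z)$ only return the baseline $\chi(\omega_Z)\ge 0$, so restricting to fibres and inducting accumulates nothing; and the codimension bounds $\operatorname{codim}V^i\ge i$ (even the strict version $>i$ for $i>0$) do not by themselves produce any positive lower bound on $\chi$ by "bookkeeping of supports." The actual Pareschi--Popa mechanism is different in kind: the absence of higher irrational pencils is used to show that $\omega_X$ satisfies strong generic vanishing, which implies that the Fourier--Mukai transform of $\omega_X$ on $\Pic^0(X)$ is a torsion-free sheaf of generic rank $\chi(\omega_X)$ that is locally a $(q(X)-n)$-th syzygy module; the Evans--Griffith syzygy theorem (a nonfree $k$-th syzygy module over a regular local ring has rank $\ge k$) then yields $\chi(\omega_X)\ge q(X)-n$ at one stroke. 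Equivalently, in the BGG picture, one needs exactness of the derivative complex over $H^1(X,\OO_X)$ in the appropriate range plus this commutative-algebra input. That syzygy-theoretic step is the missing idea, not a routine piece of bookkeeping, so the proposal as written has a genuine gap.
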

\end{remark}

\section{The slope inequality}\label{sec:slope}
\subsection{Relative canonical class and  slope of a fibration}\label{ssec:slope_definitions}
In this section we consider a fibration (``pencil'')  $f\colon S\to B$ with $S$ a smooth projectve surface and  $B$ a smooth curve of genus $b\ge 0$. Recall that a fibration is {\em smooth} iff all its fibers are smooth, and it is  {\em isotrivial} iff all the smooth fibers of $f$ are isomorphic, or, equivalently, if  the fibers over a non empty open set of $B$ are isomorphic. Isotrivial fibrations are also said to have  ``constant moduli''.

We assume that the general fiber $F$ of $f$ has genus $g\ge 2$ and that $f$ is {\em relatively minimal}, namely that there is no $-1$-curve contained in the fibers of $f$. Notice that  these assumptions are always satisfied if $S$ is minimal of general type.
Notice also that given a non minimal fibration $f$ it is always possible to pass to a minimal one   by blowing down the $-1$-curves in the fibers 

The {\em relative canonical  class}  is defined by $K_f:=K_S-f^*K_B$. We also write $\omega_f$ for the corresponding line bundle $\OO_S(K_f)=\omega_S\otimes f^*\omega_B\inv$. $K_f$ has the following positivity properties:
\begin{thm}[Arakelov, cf. \cite{Beauville_appendix}]\label{thm:arakelov}
 Let $f$ a relatively minimal fibration of genus $g\ge 2$. Then:
\begin{enumerate}
\item $K_f$ is nef, hence $K^2_f=K^2_S-8(g-1)(b-1)\ge 0$;
\item if $f$ is not isotrivial, then:
\begin{itemize}
\item[\rm(a)] $K^2_f>0$
\item[\rm(b)] $K_fC=0$ for an irreducible curve $C$ of $S$ if and only if $C$ is a $-2$-curve contained in a fiber of $f$.
\end{itemize}
\end{enumerate}
 \end{thm}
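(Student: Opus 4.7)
The plan is to split the argument into proving nefness in (1) and then the strict-inequality statements in (2), with the natural subdivision of any irreducible $C\subset S$ into \emph{vertical} ($C$ contained in a fiber) and \emph{horizontal} ($C$ dominating $B$), since these require entirely different techniques.

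For vertical $C$ I would write $K_f\cdot C=K_S\cdot C$ (using $f^*K_B\cdot C=0$), which by adjunction equals $2p_a(C)-2-C^2$. Combining Zariski's lemma (so $C^2\le 0$, with equality iff $C$ is numerically proportional to the fiber class $F$), relative minimality (which forbids $(-1)$-curves in fibers), and $g\ge 2$, a short case analysis yields $K_f\cdot C\ge 0$, with equality precisely when $C$ is a smooth rational $(-2)$-curve. The borderline case $C^2=0$ has $C$ supporting the whole fiber, giving $K_f\cdot C=(2g-2)/k\ge 0$, where $k$ is the multiplicity.

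For horizontal $C$ the essential input is the semipositivity of the Hodge bundle $f_*\omega_f$ as a vector bundle on $B$ (Fujita--Kawamata, or historically Arakelov for families of curves). Passing to an \'etale cover $B'\to B$ over which some component of $C\times_B B'$ becomes a section $\sigma$ of the pulled-back fibration $f'\colon S'\to B'$ reduces the inequality $K_f\cdot C\ge 0$ to showing $\deg\sigma^*\omega_{f'}\ge 0$, a standard consequence of the semipositivity of $f'_*\omega_{f'}$.

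For (2) I would invoke the Arakelov--Beauville strengthening that, in the non-isotrivial case, every quotient line bundle of $f_*\omega_f$ has \emph{strictly positive} degree. This immediately gives $K_f\cdot C>0$ for horizontal $C$, which combined with Step~1 proves (b). For (a), if $K_f^2=0$ then by Zariski decomposition $K_f$, being nef with vanishing square, is $\Q$-proportional to the class of a fiber of some pencil on $S$. If that pencil is $f$ itself, $K_f\equiv\alpha F$ contradicts $K_f\cdot F=2g-2>0$; otherwise a general fiber $F_0$ of the other pencil is horizontal for $f$ and satisfies $K_f\cdot F_0=0$, contradicting the horizontal strict positivity just established. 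The hardest input is the semipositivity of $f_*\omega_f$ together with its non-isotrivial strengthening; both are deep results that I would cite as black boxes, while the elementary vertical case has to be treated with care to characterize equality precisely.
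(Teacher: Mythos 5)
The paper does not actually prove this theorem: it is quoted as Arakelov's result with a pointer to Beauville's appendix to \cite{Debarre_inequalities}, so your argument has to stand on its own. Your vertical/horizontal division and the vertical analysis (adjunction plus Zariski's lemma plus relative minimality, with the $C^2=0$ case handled by $K_f\cdot F=2g-2>0$) are correct and standard. The horizontal case and part (2), however, contain genuine gaps.

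First, there is no \'etale cover $B'\to B$ that turns a component of $C\times_B B'$ into a section: the normalization $\tilde C\to B$ of a horizontal curve is in general \emph{ramified}, and no \'etale base change can split it. The classical reduction to sections base-changes along $\tilde C\to B$ itself, after which one must normalize and desingularize the fibre product and compare the relative canonical divisor of the new (no longer relatively minimal) fibration with the pullback of $K_f$; that comparison is the real content of the step and is absent. Moreover, even for an honest section $\sigma$, $\deg\sigma^*\omega_{f'}\ge 0$ does not follow directly from nefness of $f'_*\omega_{f'}$, because the evaluation map $f'^*f'_*\omega_{f'}\to\omega_{f'}$ need not be surjective along singular fibres; the standard fix is to work with $\omega^{[n]}$, $n\ge 3$, on the relative canonical model, where relative freeness exhibits it as a quotient of a nef bundle and gives nefness on \emph{all} curves at once. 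Second, the ``strengthening'' you invoke for (2) is false: if $q(S)>b$ then $h^0\bigl((f_*\omega_f)^\vee\bigr)=q(S)-b>0$ by Leray and relative duality, and since $f_*\omega_f$ is nef any nonzero map $f_*\omega_f\to\OO_Y$ to the trivial bundle is surjective, so $\OO_B$ is a degree-zero quotient line bundle; non-isotrivial pencils with $q(S)>b$ certainly exist (any non-isotrivial irrational pencil on a surface of Albanese dimension $2$ has this property, since $\Alb(S)\to J(B)$ cannot be an isogeny). The correct input for (2)(b) on horizontal curves is Arakelov's strict inequality $\Sigma^2<0$ for sections of non-isotrivial families, equivalently $K_{f'}\cdot\Sigma=-\Sigma^2>0$. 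Third, your proof of (2)(a) rests on the claim that a nef divisor with vanishing square is $\Q$-proportional to a fibre of a pencil; this is not what Zariski decomposition says, and it is false (there exist strictly nef classes with square zero, e.g.\ Mumford's examples on ruled surfaces), so (a) does not follow from (b) by your argument and needs an independent proof.
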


Let $f\colon S\to B$ be relatively minimal and let $\ol S$ be the surface obtained by contracting the $-2$ curves contained in the fibers of $f$. There is an induced fibration $\ol f\colon \ol S\to  B$  and, since ${\ol S}$ has canonical singularities, $K_f$ is the pull back of  $K_{\ol f}:= K_{\ol S}-\ol f^*K_B$. By the Nakai criterion for ampleness,   Theorem \ref{thm:arakelov}, (ii) can be restated by saying that if  $\ol f$ (equivalently, $f$)  is not isotrivial then $K_{\ol f}$ is ample on $\bar{S}$.

Given a pencil $f$ with general fiber of genus $g$, the push forward $f_*\omega_f$ is a rank $g$ vector bundle on  $B$ of degree $\chi_f=\chi(S)-(b-1)(g-1)$.  Recall that a vector bundle $E$ on a variety  $X$ is said to be {\em nef } if the tautological line bundle $\pp(E)$ is nef.
\begin{thm}[Fujita, cf. \cite{Fujita_kahler}, \cite{Beauville_appendix}]\label{thm:fujita}
Let $f\colon S\to B$ be a relatively minimal fibration with general fiber of genus $g\ge 2$.
Then:
\begin{enumerate}
\item $f_*\omega_f$ is nef. In particular, ${\OO_{\pp(f_*\omega_f)}(1)}\,^g=\chi_f \ge0$;
\item $\chi_f=0$ if and only if $f$ is smooth and isotrivial.
\end{enumerate}
\end{thm}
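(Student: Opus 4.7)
The first step is a Riemann--Roch computation on $B$ identifying $\chi_f$ with $\deg f_*\omega_f$. By Grothendieck--Serre relative duality for the fibration $f$ of relative dimension $1$, one has $R^1f_*\OO_S\iso (f_*\omega_f)^{\vee}$, and the Leray spectral sequence yields
\[
\chi(\OO_S)=\chi(\OO_B)-\chi((f_*\omega_f)^{\vee})=(1-b)+\deg f_*\omega_f+g(b-1)=\deg f_*\omega_f+(g-1)(b-1).
\]
Hence $\chi_f=\deg f_*\omega_f$, and since $\OO_{\pp(f_*\omega_f)}(1)^g=\deg f_*\omega_f$, the nefness of $f_*\omega_f$ immediately gives $\chi_f\ge 0$. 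Thus both conclusions of (i) reduce to the single assertion that $f_*\omega_f$ is nef.

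To prove the nefness I would invoke Deligne--Mumford semistable reduction: after a finite base change $\pi\colon B'\to B$ the pulled-back family admits a semistable relatively minimal model $f'\colon S'\to B'$. On the smooth locus ${B'}^{\circ}\subset B'$, the sheaf $f'_*\omega_{f'}$ coincides with the Hodge bundle of the VHS $R^1f'_*\C$, and Griffiths' curvature formula shows that the Hodge metric has semi-negative curvature, so the bundle is nef over ${B'}^{\circ}$. Fujita's extension argument then shows that the Hodge metric has at most logarithmic singularities across the nodal fibers in a way that preserves semipositivity, so $f'_*\omega_{f'}$ is nef on all of $B'$. Finally, the nefness descends to $f_*\omega_f$ via the natural inclusion $\pi^*f_*\omega_f\hookrightarrow f'_*\omega_{f'}$, whose cokernel is torsion: any quotient line bundle of $\phi^*f_*\omega_f$ along a test map $\phi\colon C\to B$ thus has nonnegative degree.

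For (ii), the direction ($\Leftarrow$) is immediate: a smooth isotrivial $f$ trivializes after a finite etale base change, so $f_*\omega_f$ is etale-locally trivial and has degree $0$. For the converse, assume $\deg f_*\omega_f=0$; combined with the nefness from (i), $f_*\omega_f$ must be numerically flat. This forces the monodromy of the Hodge bundle on the smooth locus to be unitary, and by the Deligne--Fujita semisimplicity theorem the underlying VHS is locally constant, making the family of Jacobians isotrivial; Torelli then yields isotriviality of the fibers themselves. To conclude smoothness one uses the complementary fact (also due to Fujita) that each singular fiber of a semistable model contributes strictly positively to $\deg f_*\omega_f$, so $\deg f_*\omega_f=0$ already forces $f$ to be smooth. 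The main obstacle of this plan is precisely this rigidity step: passing from the single numerical vanishing $\deg f_*\omega_f=0$ to both isotriviality and the absence of singular fibers requires nontrivial Hodge-theoretic input (unitarity of monodromy implying local constancy of the VHS, plus Torelli) together with a delicate positivity estimate for the contribution of each nodal fiber; by contrast, the algebraic part consisting of the Riemann--Roch identification and the reduction to the nefness statement is essentially formal.
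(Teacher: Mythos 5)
The paper itself gives no proof of this statement --- it is quoted from \cite{Fujita_kahler} and from Beauville's appendix to \cite{Debarre_inequalities} --- so your argument has to stand on its own. The relative-duality/Riemann--Roch identification $\chi_f=\deg f_*\omega_f$ is correct, and the Hodge-theoretic route to nefness is indeed Fujita's. But your descent step has the arrow backwards: for a ramified base change $\pi\colon B'\to B$ the natural generically-isomorphic map goes $f'_*\omega_{f'}\hookrightarrow \pi^*f_*\omega_f$, not $\pi^*f_*\omega_f\hookrightarrow f'_*\omega_{f'}$ (this is exactly why one only has $\deg f'_*\omega_{f'}\le \deg(\pi)\cdot\deg f_*\omega_f$ in general, with strict inequality e.g.\ for isotrivial fibrations with multiple fibers). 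As written your last sentence does not close: a quotient line bundle $L$ of $\phi^*f_*\omega_f$ receives no map from the nef bundle $\phi'^*f'_*\omega_{f'}$ under your inclusion, and a full-rank subsheaf of a nef bundle need not be nef. With the correct arrow the argument does work, since the image of $f'_*\omega_{f'}\to L$ is a rank-one quotient of a nef bundle sitting inside $L$; so this slip is repairable.

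The genuine gap is in (ii). Even granting all the Hodge theory, your argument only shows that the \emph{semistable model} $f'\colon S'\to B'$ is smooth and isotrivial, whereas the theorem is about $f$. These are not the same: quotients $(C\times D)/G\to D/G$ with $G$ acting freely on $C\times D$ but not on $D$ are isotrivial and non-smooth (they have multiple fibers with smooth reduction of smaller genus), their semistable models \emph{are} smooth, and they have $\chi_f>0$ only because of the base-change defect $\deg f'_*\omega_{f'}<\deg(\pi)\deg f_*\omega_f$ --- a quantity your argument never examines. Your proposed fix, that each singular fiber of the semistable model contributes strictly positively to $\deg f_*\omega_f$, does not see these fibers at all, and is moreover not a purely local Hodge-theoretic fact: a semistable fiber with only separating nodes has trivial monodromy, so ruling it out needs either separatedness of the moduli space of stable curves or the global identity $12\chi_f=K_f^2+\sum_P\bigl(e(F_P)-e(F)\bigr)$ (Noether's formula combined with \eqref{eq:c2}) together with Arakelov's $K_f^2\ge 0$. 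That identity is in fact the clean route to (ii) entirely within the results quoted in \S\ref{ssec:slope_definitions}: both summands are nonnegative, $\sum_P(e(F_P)-e(F))=0$ forces every fiber of $f$ itself to be smooth, $K_f^2=0$ forces $f$ isotrivial by Theorem \ref{thm:arakelov}, and the converse follows from the description of smooth isotrivial fibrations as free quotients of products \cite{Serrano}. I would replace your part (ii) by that argument.
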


From now one we assume that the relatively minimal fibration $f\colon S\to B$ is not isotrivial. The {\em slope} of $f$ is defined as:
\begin{equation}
\la(f):=\frac {K^2_f}{\chi_f}=\frac{K^2_S-8(b-1)(g-1)}{\chi(S)-(b-1)(g-1)}.
\end{equation}
By Theorem \ref{thm:arakelov} and Theorem \ref{thm:fujita}, $\lambda(f)$ is well defined and $>0$.

The main result concerning the slope is:
\begin{thm}[Slope inequality]\label{thm:slope}
Let $f\colon S\to B$ be a relatively minimal fibration with fibers of genus $g\ge 2$. If $f$ is not smooth and isotrivial, then:
$$\frac{4(g-1)}{g}\le \la(f)\le 12.$$
\end{thm}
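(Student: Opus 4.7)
The two bounds have very different flavors: the upper bound $\lambda(f)\leq 12$ is essentially a relative version of Noether's formula, while the lower bound is the classical slope inequality of Cornalba--Harris and Xiao.

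\textbf{Upper bound.} I would introduce the \emph{relative Euler number}
$$e_f := c_2(S) - 4(g-1)(b-1),$$
and observe that topologically
$$e_f = \sum_{p\in B}\bigl(e(F_p) - e(F_{\mathrm{gen}})\bigr),$$
a sum over points with singular fiber ($F_{\mathrm{gen}}$ denotes a smooth fiber). Each local summand is nonnegative: relative minimality and $g\geq 2$ forbid fiber components that are $(-1)$-curves of genus $0$, and a standard Zeuthen--Segre-style computation then gives $e(F_p)\geq e(F_{\mathrm{gen}})$ with equality iff $F_p$ is smooth. Combined with Noether's formula $12\chi(S)=K_S^2+c_2(S)$ and the identity $K_f^2 = K_S^2-8(b-1)(g-1)$, this yields
$$12\chi_f = K_f^2 + e_f,$$
so $\lambda(f)\leq 12$.

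\textbf{Lower bound (Xiao).} Start from $E:=f_*\omega_f$, a nef rank-$g$ bundle on $B$ of degree $\chi_f$ (Theorem \ref{thm:fujita}). Take its Harder--Narasimhan filtration
$$0 = E_0 \subsetneq E_1 \subsetneq \cdots \subsetneq E_n = E,$$
with semistable graded pieces of strictly decreasing slopes $\mu_1>\cdots>\mu_n\geq 0$ (the last inequality because $E$ is nef); set $r_i:=\operatorname{rk}(E_i)$ and $\mu_{n+1}:=0$, so that $\chi_f = \sum_{i=1}^n r_i(\mu_i-\mu_{i+1})$. For each $i$, the inclusion $E_i\hookrightarrow f_*\omega_f$ is adjoint to a morphism $f^*E_i\to\omega_f$; decomposing its image into mobile and fixed parts produces a nef divisor $M_i$ on $S$ with $K_f - M_i$ effective. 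On a general fiber $F$ the restriction $M_i|_F$ sits inside $|\omega_F|$ and moves in a linear subsystem of projective dimension $\geq r_i-1$; Clifford's theorem therefore gives
$$M_i\cdot F \geq 2(r_i-1).$$

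The remaining step is a numerical optimization: for each $i$ one applies the Hodge index theorem on $S$ to the pair $K_f,\,M_i$ (using that $K_f$ is nef, $M_i$ is nef, and $K_f-M_i$ is effective) to obtain an inequality controlling $K_f^2$ in terms of $M_i\cdot F$ and the slopes attached to $E_i$. Telescoping these inequalities across the filtration with weights $\mu_i-\mu_{i+1}$, using $\chi_f=\sum r_i(\mu_i-\mu_{i+1})$, and invoking the elementary bound $\frac{2(r_i-1)}{r_i}\geq \frac{2(g-1)}{g}$, yields
$$K_f^2 \;\geq\; \tfrac{4(g-1)}{g}\,\chi_f,$$
completing the proof.

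\textbf{Main obstacle.} The hard part is entirely the lower bound, and within it the combinatorial orchestration that converts the family of Hodge-index inequalities indexed by the Harder--Narasimhan filtration into the single clean bound with the sharp constant $\tfrac{4(g-1)}{g}$. The Clifford estimate on fibers and the nefness of the $M_i$ are the conceptual inputs, but selecting the correct linear combination of inequalities, carefully tracking mobile versus fixed parts simultaneously across all stages of the filtration, and dealing with the hyperelliptic case (where Clifford is sharp and governs the extremal examples) constitutes the genuinely clever technical core of Xiao's argument.
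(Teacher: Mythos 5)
Your upper-bound argument coincides with the paper's: Noether's formula $12\chi(S)=K_S^2+c_2(S)$ combined with the fibered Euler-number formula $c_2(S)=4(b-1)(g-1)+\sum_P\bigl(e(F_P)-e(F)\bigr)$ and the pointwise inequality $e(F_P)\ge e(F)$ give $12\chi_f=K_f^2+e_f$ with $e_f\ge 0$. That half is complete and correct. Note, however, that the paper does not prove the lower bound at all: it explicitly declines to reproduce the arguments and cites Xiao, Cornalba--Harris (as completed by Stoppino) and Moriwaki, recording only that Xiao's method uses the Harder--Narasimhan filtration of $f_*\omega_f$ and Clifford's lemma. So for the lower bound you are to be measured against Xiao's argument, which you sketch.

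Your sketch correctly assembles the ingredients (nefness of $f_*\omega_f$, the Harder--Narasimhan filtration, the mobile parts $M_i$ with $M_i\cdot F\ge 2(r_i-1)$ by Clifford, the identity $\chi_f=\sum_i r_i(\mu_i-\mu_{i+1})$), but the step you defer to ``telescoping'' is exactly where the theorem lives, and as written it does not close. Concretely, the inequality $\frac{2(r_i-1)}{r_i}\ge\frac{2(g-1)}{g}$ that you invoke is reversed: $r\mapsto 2-2/r$ is increasing and $r_i\le g$, so $\frac{2(r_i-1)}{r_i}\le\frac{2(g-1)}{g}$. More substantively, a term-by-term comparison of $\sum_i(d_i+d_{i+1})(\mu_i-\mu_{i+1})$ with $\frac{4(g-1)}{g}\sum_i r_i(\mu_i-\mu_{i+1})$, using only $d_i\ge 2(r_i-1)$ and $r_{i+1}\ge r_i+1$, requires $4r_i-2\ge\frac{4(g-1)}{g}r_i$, i.e. $r_i\ge g/2$, so it fails for the low-rank steps of the filtration. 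Xiao's actual proof needs his more flexible inequality (valid for an arbitrary subsequence of indices of the filtration) together with a case analysis on where the slopes drop; none of that is recoverable from your outline. In summary: the upper bound is proved and matches the paper; the lower bound remains, in your write-up as in the paper's, a pointer to the literature rather than a proof, and the one quantitative step you do commit to is wrong as stated.
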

\smallskip

The inequality $\la(f)\le 12$ follows from Noether's formula  $12\chi(S)=K^2_S+c_2(S)$ and from the following  well known formula for the Euler characteristic of a fibered surface (cf. \cite[Prop. II.11.24]{bpv}):
\begin{equation}\label{eq:c2}
c_2(S)=4(b-1)(g-1)+\sum_{P\in T} e(F_P)-e(F),
\end{equation}
where $e$ denotes the topological Euler characteristic, $T$ is the set of critical values of $f$, $F_p$ is the fiber of $f$ over  the point $P$ and $F$ is a general fiber. For any point $P\in B$  one has $e(F_P)\ge e(F)$, with equality holding only if $F_P$ is smooth (cf. {\em ibidem}). Hence, the main content  of Theorem \ref{thm:slope} is the lower bound $\la(f)\ge \frac{4(g-1)}{g}$.

We have seen  (Proposition \ref{prop:density} and Remark \ref{rem:pencil}) that the ratios $K^2/\chi$ for surfaces with an irrational pencil are dense in the interval $[2,9]$.  The slope inequality gives a lower bound for this ratio in terms of the genus $g$ of the general fiber of the pencil.

 \begin{prop}\label{prop:2chiq1} Let $S$ be a minimal surface of general type that has  an irrational pencil  $f\colon S\to B$ with general fiber of genus $g$.
Then:
 $$K^2_S\ge \frac{4(g-1)}{g}\chi(S)\ge 2\chi(S).$$
In particular, if $K^2_S=2\chi(S)$, then $g=2$ and $B$ has genus 1.
\end{prop}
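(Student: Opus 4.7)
The plan is to invoke the slope inequality (Theorem \ref{thm:slope}) on $f$, after a preliminary reduction to $g\ge 2$ and a separate quick treatment of the case that falls outside its hypotheses.

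First I would verify that the general fiber genus $g$ is at least $2$. By adjunction a general smooth fiber $F$ satisfies $K_S\cdot F=2g-2$ and $F^2=0$. Since $S$ is minimal of general type, $K_S$ is big and nef, so the Hodge index theorem forbids $K_S\cdot F=0$: otherwise the nonzero class $F$ would lie in the negative-definite orthogonal complement of the big class $K_S$ yet have $F^2=0$. Hence $g\ge 2$.

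Next I would split into two cases according to whether Theorem \ref{thm:slope} applies. If $f$ is smooth and isotrivial, Theorem \ref{thm:fujita}(ii) gives $\chi_f=0$, so $\chi(S)=(b-1)(g-1)$; formula \eqref{eq:c2} yields $c_2(S)=4(b-1)(g-1)=4\chi(S)$, and Noether's formula then gives $K_S^2=8\chi(S)$, which comfortably exceeds $\frac{4(g-1)}{g}\chi(S)$ since $g\ge 2$. Otherwise, Theorem \ref{thm:slope} gives $K_f^2\ge \frac{4(g-1)}{g}\chi_f$. Substituting $K_f^2=K_S^2-8(b-1)(g-1)$ and $\chi_f=\chi(S)-(b-1)(g-1)$ and rearranging produces
\[
K_S^2\ \ge\ \frac{4(g-1)}{g}\chi(S)+\frac{4(g+1)(b-1)(g-1)}{g},
\]
and the correction term is nonnegative since $b\ge 1$. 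This yields the first displayed inequality in the proposition; the second, $\frac{4(g-1)}{g}\chi(S)\ge 2\chi(S)$, is immediate from $g\ge 2$.

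For the final sentence, suppose $K_S^2=2\chi(S)$. The smooth isotrivial case is excluded, since there $K_S^2=8\chi(S)>2\chi(S)$. In the remaining case, the chain $2\chi(S)=K_S^2\ge \frac{4(g-1)}{g}\chi(S)\ge 2\chi(S)$ forces equality throughout: the rightmost equality gives $g=2$, and then the vanishing of the correction term above forces $(b-1)(g-1)=0$, i.e.\ $b=1$, so $B$ is a curve of genus one.

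I do not anticipate a serious technical obstacle: the core of the argument is the slope inequality combined with Fujita's theorem, and the only preliminary step requiring any thought is the bound $g\ge 2$ via Hodge index; the rest is algebraic rearrangement.
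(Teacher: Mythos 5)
Your proof is correct and follows essentially the same route as the paper's: apply the slope inequality to the pencil (which is automatically relatively minimal since $S$ is minimal) and dispose of the smooth isotrivial case separately, where $K_S^2=8\chi(S)$. The differences are only cosmetic --- the paper gets $K^2=8\chi$ in the isotrivial case from Serrano's structure theorem rather than from Fujita's theorem plus Noether's formula and \eqref{eq:c2}, and rearranges the slope inequality via the observation that $K^2/\chi\ge\lambda(f)$ whenever $K^2<8\chi$ instead of exhibiting your explicit nonnegative correction term $\frac{4(g+1)(b-1)(g-1)}{g}$ --- while your preliminary Hodge-index check that $g\ge 2$ makes explicit a point the paper leaves implicit.
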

\begin{proof} Assume that the pencil $S$ is not smooth and isotrivial. If   $K^2_S\ge 8\chi(S)$ then of course the statement holds. If $K^2<8\chi$, then  $K^2_S/ \chi(S)\ge \lambda(f)$ and the stament follows by the slope inequality. 

If $f$ is smooth and isotrivial, then by \cite[\S 1]{Serrano}, $S$ is a quotient $(C\times D)/G$ where $C$ and $D$ are curves of genus $\ge 2$ and $G$ is a finite group that acts freely. In particular, $K^2_S=8\chi(S)$ and the inequality is satisfied also in this case.

If $K^2_S=2\chi(S)$, then by the previous remarks $f$ is not smooth and isotrivial. We have:
$$2= \frac{K^2_S} {\chi(S)}\ge \lambda(f)\ge\frac{4(g-1)}{g}.$$
It follows immediately that $g=2$ and $B$ has genus 1. 
\end{proof}

 Further  applications of the slope inequality are given in \S \ref{sec:severi}.

\subsection{History and proofs}\label{ssec:slope_proofs}

Theorem \ref{thm:slope} has been stated and proven first in the case of hyperelliptic fibrations in \cite{Persson}, \cite{HorikawaV}. 

The general statement was  proven in \cite{Xiao_slope} and, independently, in \cite{CornalbaHarris} under the extra assumption that the fibers of $f$ be semistable curves, i.e. nodal curves.  The  proof  of \cite{CornalbaHarris} has been recently generalized  in \cite{Stoppino} to cover the general case.  Yet another proof has been  given  in \cite{Moriwaki}.

In addition to the original papers, one can find  in \cite{Azumino} a nice account of Xiao's proof and of Moriwaki's proof, and in \cite{Stoppino} a generalization of Cornalba-Harris proof. Hence  it seems superfluous to include the various proofs here.

We just wish to point out that the three methods of proof are different.  Xiao's proof uses the Harder-Narasimhan filtration  of the vector bundle $f_*\omega_f$  and Clifford's Lemma. 

Cornalba-Harris proof uses GIT and relies on the fact that a canonically embedded curve of genus $g\ge 3$ is stable (fibrations whose   general fiber is hyperelliptic are treated separately). 

Moriwaki's proof is based on Bogomolov's instability theorem for vector bundles on surfaces.

\subsection{Refinements and generalizations}\label{ssec:slope_remarks}\ 

(a) \underline{Fibrations attaining the lower or the upper bound for the slope.}\\
The examples constructed in   \S \ref{ssec:examples}, (c) show that the lower bound for $\la(f)$ given in  Theorem \ref{thm:slope} is sharp. By construction, the general fiber in all these examples is hyperelliptic. This is not a coincidence: in \cite{Konno_SNS} it is proven that the general fiber of a  fibration attaining the minimum possible value of the slope is  hyperelliptic.
On the other hand, if $\lambda(f)=12$, then by Noether's formula one has $c_2(S)=4(g-1)(b-1)$. Hence by \eqref{eq:c2}, this happens if and only if all the fibers of $f$ are smooth.
\smallskip

(b) \underline{Non hyperelliptic fibrations.}\\
Since, as explained in (a), the minimum value of the slope is attained only by hyperelliptic fibrations, it is natural to look  for  a better bound for non hypereliptic fibrations. In \cite{Konno_SNS}, such a lower bound is established for $g\le 5$.
In \cite{Konno_Clifford}, it is shown that the inequality
 $$\lambda(f)\ge \frac{6(g-1)}{g+1}$$
 holds if: (1) $g$ is odd, (2) the general fiber of $f$ has maximal Clifford index, (3) Green's conjecture is true for curves of genus $g$.

Since in fact Green's conjecture for curves of odd genus and maximal Clifford index  has later been proven(\cite{Voisin} and \cite{Ramanan}), Konno's result actually holds under assumption (1) and (2).

The influence of the Clifford index and of the {\em relative irregularity} $q_f:=q(S)-b$ has been studied also in \cite{BarjaStoppino}.
\smallskip

(c) \underline{Fibrations with general fiber of special type.}\\ Refinements of the slope inequality have been obtained also under the assumption that the general fiber has some special geometrical property. 

In \cite{Konno_Trigonal} it is shown that if the general fiber of $f$ is trigonal and $g\ge 6$, then $\lambda(f)\ge \frac{14(g-1)}{3g+1}$.
In \cite{BarjaStoppino2} it is shown that the better  bound $\lambda(f)\ge \frac{5g-6}{g}$ holds if $g\ge 6$ is even and the general fiber of $f$ has Maroni invariant $0$.

The case in which the general  fiber of $f$  has an involution with quotient a curve of genus $\ga$  has been considered in several papers (\cite{Barja_bielliptic}, \cite{BarjaZucconi}, \cite{CornalbaStoppino}): the most general result in this direction is the inequality 
$$\lambda(f)\ge \frac{4(g-1)}{g-\ga},$$
 which holds for  $g>4\ga+1$ (\cite{CornalbaStoppino}).
\smallskip

(d) \underline{Generalizations to higher dimensions.}\\
Let $f\colon X\to B$ be a fibration, where $X$ is an $n$-dimensional $\Q$-Gorenstein variety and $B$ is a smooth  curve. As in the case of surfaces, one can consider the relative canonical divisor $K_f:=K_X-f^*K_B$ and define the slope of $f$ as:
$$\lambda(f):=\frac{{K_f}^n}{\deg f_*(\OO_X(K_f))}.$$
This situation is studied in \cite{BarjaStoppino2}, where some lower bounds are obtained  under quite restrictive assumptions on the fibers. 

The relative numerical invariants of threefolds fibered over a curve have  also been studied in \cite{Ohno} and \cite{Barja_threefolds}.
\section{The Severi inequality}\label{sec:severi}
\subsection{History and proofs}\label{ssec:Severi_proofs}

In  a paper of 1932 (\cite{Severi})  Severi stated the following:  
\begin{thm}[Severi inequality]\label{thm:Severi}

Let $S$  be a minimal surface of general type with  $\albdim(S)=2$, then:
$$K^2_S\ge 4\chi(S).$$
\end{thm}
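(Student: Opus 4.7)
The plan is to deduce the Severi inequality from the slope inequality (Theorem~\ref{thm:slope}) by a limiting argument using étale covers of $S$ coming from the multiplication maps on the Albanese variety. Write $q=q(S)$, $A:=\Alb(S)$, and let $a\colon S\to A$ be the Albanese map, which is generically finite onto its image by the hypothesis $\albdim(S)=2$. For each integer $d\ge 1$, form the fibre product $S_d:=S\times_A A$ along the isogeny $\mu_d\colon A\to A$. The first projection $\pi_d\colon S_d\to S$ is étale of degree $d^{2q}$, so $S_d$ is minimal of general type with $K^2_{S_d}=d^{2q}K^2_S$ and $\chi(S_d)=d^{2q}\chi(S)$; the second projection $q_d\colon S_d\to A$ satisfies $\mu_d\circ q_d=a\circ\pi_d$, yielding the crucial numerical identity $\pi_d^*(a^*H)\equiv d^2\,q_d^*H$ on $S_d$ for any symmetric $H\in\Pic(A)$.

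Fix a very ample symmetric $H$ on $A$ and a Lefschetz pencil $\Lambda\subset|H|$. Pulling $\Lambda$ back via $q_d$ and blowing up the finite set $q_d^{-1}(\mathrm{Bs}\,\Lambda)$ yields, after Stein factorization, a relatively minimal fibration $f_d\colon\tilde S_d\to B_d$ of genus $g_d$ over a smooth curve $B_d$ of genus $b_d$; since Lefschetz pencils always have nodal members, $f_d$ is not smooth, so Theorem~\ref{thm:slope} applies. Intersection bookkeeping using the displayed identity and $(\mu_d^*H)^q=d^{2q}H^q$ gives
$$F_d^2=d^{2q-4}(a^*H)^2,\qquad K_{S_d}\cdot F_d=d^{2q-2}(K_S\cdot a^*H),$$
so by adjunction the fibre genus satisfies $g_d=\Theta(d^{2q-2})$ as $d\to\infty$.

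Substituting $K^2_{f_d}=d^{2q}K^2_S-F_d^2-8(b_d-1)(g_d-1)$ and $\chi_{f_d}=d^{2q}\chi(S)-(b_d-1)(g_d-1)$ into the slope inequality $K^2_{f_d}\ge \tfrac{4(g_d-1)}{g_d}\chi_{f_d}$ and rearranging, one obtains
$$d^{2q}K^2_S\ \ge\ \tfrac{4(g_d-1)}{g_d}\,d^{2q}\chi(S)+F_d^2+\tfrac{4(b_d-1)(g_d^2-1)}{g_d}.$$
Divide by $d^{2q}$ and let $d\to\infty$: the first term tends to $4\chi(S)$; the second is $O(d^{-4})\to 0$; and the third is either nonnegative (when $b_d\ge 1$, which only strengthens the inequality) or of order $O(g_d/d^{2q})=O(d^{-2})\to 0$ (when $b_d=0$). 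Hence $K^2_S\ge 4\chi(S)$.

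The main obstacle will be the technical bookkeeping surrounding the fibration $f_d$: checking that the pulled-back Lefschetz pencil yields, after Stein factorization, a genuine fibration whose fibre and blowup numerics are as claimed; confirming that any contraction of $(-2)$-fiber components needed to reach a relatively minimal model contributes only lower-order corrections; and verifying that the asymptotic orders $g_d=\Theta(d^{2q-2})$ and $F_d^2=\Theta(d^{2q-4})$ are correctly subordinate to the leading $d^{2q}$-terms, so that the error contributions truly disappear in the limit.
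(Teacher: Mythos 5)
Your argument is essentially the paper's own proof (the one from \cite{Rita_Severi} sketched in \S\ref{ssec:Severi_proofs}): the same \'etale covers $S_d\to S$ obtained by pulling back multiplication by $d$ on $\Alb(S)$, the same pencils in $|a_d^*D|$, and the same limiting application of the slope inequality, with the key point in both cases being that $H_d^2$ and $K_{S_d}H_d$ grow like $d^{2q-4}$ and $d^{2q-2}$ while $K^2_{S_d}$ and $\chi(S_d)$ grow like $d^{2q}$. Your bookkeeping (the identity $\pi_d^*(a^*H)\equiv d^2 q_d^*H$, the blowup correction $F_d^2$, and the $b_d=0$ error term of order $g_d/d^{2q}$) is correct and in fact fills in details the paper leaves implicit.
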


Unfortunately, Severi's proof contained a fatal gap,  that was pointed out by Catanese  (\cite{Catanese_Ravello}), who posed the inequality as a conjecture.  More or less at the same time, Reid (\cite{Miles_pi1}) made the following conjecture, that for irregular surfaces is a consequence  of Theorem \ref{thm:Severi} (cf. Proposition  \ref{prop:Reid}). 
\begin{conj}[Reid]\label{conj:Reid}
If $S$ is a minimal surface of general type such that $K_S^2<4 \chi(S)$ then either $\pionealg(S)$ is finite or  there exists a finite \'etale cover $S'\to S$ and a pencil $f\colon S'\to B$ such that the induced  surjective map on the algebraic  fundamental groups has finite kernel.
\end{conj}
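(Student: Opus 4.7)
The plan is to derive the conjecture from the Severi inequality (Theorem \ref{thm:Severi}) applied systematically along the tower of finite \'etale covers of $S$. Because $K^2$ and $\chi$ are both multiplicative under \'etale covers, the hypothesis $K^2_S<4\chi(S)$ is inherited by every finite \'etale cover $S'\to S$, and Severi then forces $\albdim(S')\le 1$ throughout the tower. The dichotomy in the conclusion ($\pionealg(S)$ finite versus the existence of a pencil with finite-kernel $\pi_1$) should correspond to how the Albanese dimensions distribute along this tower: Albanese dimension $0$ means $q$ vanishes on the cover, while Albanese dimension $1$ produces a pencil that is an obvious candidate.

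Assume $\pionealg(S)$ is infinite. The first step is to exhibit a finite \'etale cover $S'\to S$ with $q(S')>0$, and this is the point I expect to be the main obstacle, because without it Severi is silent and one must rule out the possibility that every finite-index subgroup of $\pionealg(S)$ has finite abelianisation. Cocompact ball quotients are automatically outside the regime $K^2<4\chi$ since they satisfy $K^2=9\chi$, so what must be excluded is a hypothetical ``fake-projective-plane type'' surface of small slope. My strategy would be to try to refine the proof of Severi itself so that it applies to intermediate quotients in the profinite tower and yields a contradiction whenever abelianisations remain torsion; alternatively, failing such a refinement, one would handle these hypothetical surfaces case-by-case using the classification of the few known families living on or near the slope line $K^2=4\chi$ and a direct verification that each has either $q>0$ on some \'etale cover or finite $\pionealg$.

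Once such a cover $S'$ is found, Severi forces $\albdim(S')=1$ and thus an Albanese pencil $a'\colon S'\to B'$ with $g(B')=q(S')\ge 1$, which is the pencil required by the conjecture. To show that $a'_*\colon \pionealg(S')\to\pionealg(B')$ has finite kernel, I would argue by contradiction and iterate. If the kernel were infinite, then combining pullbacks of finite \'etale covers of $B'$ with \'etale covers corresponding to finite-index subgroups of the kernel produces \'etale covers $S''\to S'$ whose irregularity strictly exceeds the genus of the base curve $B''$; but $S''$ still satisfies $K^2<4\chi$, so $\albdim(S'')=1$, and its own Albanese pencil would have to be genuinely different from the pullback of $a'$. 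Iterating and using Severi's finiteness of pencils of genus $\ge 2$ (cf.\ \S\ref{ssec:pencils}) to bound the high-genus case, plus a direct argument via relative Jacobians in the elliptic case, yields the required contradiction after finitely many steps. The secondary technical difficulty is keeping careful track of how the Albanese map transforms under base change in this tower, and ensuring that the strict growth of irregularity really is forced rather than being absorbed into the growth of $g(B'')$.
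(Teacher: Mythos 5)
The statement you are trying to prove is stated in the paper as a \emph{conjecture}: it is open in general, and the paper only establishes it for surfaces admitting an irregular finite \'etale cover (Proposition \ref{prop:Reid}), explicitly remarking that the complementary case is known only for $K^2<3\chi$. Your proposal correctly identifies the genuine obstacle --- producing, from the infinitude of $\pionealg(S)$, a finite \'etale cover with $q>0$ --- but does not overcome it. The two strategies you offer for this step are not viable: there is no classification of minimal surfaces of general type with $3\chi\le K^2<4\chi$ and no irregular \'etale cover to run a case-by-case check on (the known results in this range, cf.\ \cite{RitaMarg_JDG}, \cite{CiroRitaMarg}, rely on Castelnuovo's inequality and stop at $K^2<3\chi$); and ``refining the proof of Severi so that it applies to intermediate quotients in the profinite tower'' is not a concrete argument, since the Severi inequality says nothing about a surface all of whose finite \'etale covers are regular. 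So the core of the conjecture remains unproved in your proposal, by your own admission.

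For the part that \emph{is} a theorem (the case where an irregular \'etale cover exists), your route also diverges from the paper's and is incomplete. The paper does not prove finiteness of the kernel by an iteration on irregularities; it first passes to a further \'etale cover (base change along a cover of the Albanese base, then Stein factorization) so that the Albanese pencil has \emph{no multiple fibers}, and then invokes Xiao's Proposition \ref{prop:Xiao_pi1}, which for a relatively minimal fibration with $\lambda(f)<4$ and no multiple fibers gives $1\to N\to\pionealg(S')\to\pionealg(B)\to 1$ with $|N|\le 2$. Multiple fibers are the actual obstruction to the exactness of this sequence, and your sketch never addresses them; moreover your claim that an infinite kernel forces \'etale covers whose irregularity strictly exceeds the genus of the base is exactly the delicate point (finite quotients of the kernel need not be detected by $H_1$ of any cover), and ``iterating finitely many times'' is not justified. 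If you restrict your goal to Proposition \ref{prop:Reid} and replace the iteration by the elimination of multiple fibers plus Xiao's result, the argument closes; as a proof of Conjecture \ref{conj:Reid} itself it cannot be repaired with the tools in this paper.
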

(Recall that for a complex variety $X$ the algebraic fundamental  group $\pionealg(X)$ is the profinite completion of  the topological fundamental group $\pi_1(X)$). 

Motivated
by this conjecture, Xiao wrote the foundational paper \cite{Xiao_slope} on  surfaces fibred over a curve,
in which he proved both  the Severi inequality  in the special case of a surface with an irrational pencil and the slope inequality (cf. \S \ref{sec:slope}).  

Building on the  results of \cite{Xiao_slope}, Severi's conjecture was proven by Konno (\cite{Konno_Severi}) for even surfaces, namely surfaces such that the canonical class is divisible by 2 in the Picard group.
At the end of the 1990's, the conjecture was almost solved by
Manetti (\cite{Manetti_Severi}),  who proved  it  under  the additional assumption that
the surface have ample canonical bundle. Finally,  the inequality was proven in \cite{Rita_Severi}.

The proof given in \cite{Rita_Severi} and 
the proof  given  in \cite{Manetti_Severi} for $K$ ample are completely different. We sketch  briefly both proofs: 
\medskip

(a) \underline{Proof for $K$ ample (\cite{Manetti_Severi}):}
Let  $\pi\colon  \pp(\Omega^1_S)\to S$ be the projection and let $L$ be the hyperplane class of $\pp(\Omega^1_S)$.  Assume for simplicity that $H^0(\Omega^1_S)$ has no base curve. Then two general elements $L_1, L_2 \in |L|$ intersect properly, hence $L^2$ is represented by the effective $1$-cycle $L_1\cap L_2$.
One has:
\begin{equation}\label{eq:manetti}
L^2(L+\pi^*K_S)=3(K^2_S-4\chi(S)).
\end{equation}
If $L+\pi^*K_S$ is nef, then Theorem \ref{thm:Severi} follows immediately by \eqref{eq:manetti}. However, this is not the case in general, and one is forced to analyze the cycle $L_1\cap L_2$ more closely.  One can write:
$$L_1\cap L_2=V+\Ga_0+\Ga_1+\Ga_2,$$
where  $V$ is a sum of fibers of $\pi$, and the  $\Ga_i$ are sums of sections of $\pi$. More precisely, the support of  $\pi(\Ga_0)$ is  the union of the curves contracted by the Albanese map $a$,  the support of $\pi(\Ga_1)$ consists  of  the curves not contracted by $a$ but on which the differential of $a$ has rank 1 and $\pi(\Ga_2)$ is supported on curves on which the differential of $a$ is generically non singular.  The term $\Ga_0(L+\pi^*K_S)$ can be $<0$, but by means of a very careful analysis of the components of $\Ga_0$ and of the multiplicities in the vertical cycle $V$ one can show that $L^2(L+\pi^*K_S)=(L_1\cap L_2)(L+\pi^*K_S)\ge 0$. Unfortunately this kind of analysis does not work when $\pi(\Ga_0)$ contains $-2$-curves, hence one has to assume $K_S$ ample.
\medskip

(a) \underline{Proof (\cite{Rita_Severi}):}

Set $A:=\Alb(S)$ and fix a very ample divisor $D$ on $A$. For every integer $d$ one constructs a fibered surface $f_d\colon Y_d\to \pp^1$ as follows:
\smallskip

(1) consider the following cartesian diagram, where $\mu_d$ denotes  the multiplication by $d$: 
\begin{equation}
\begin{CD}
S_d@>{p_d}>> S\\
@V{a_d}VV @VVaV\\
A@>\mu_d>>A
\end{CD}
\end{equation}
and let $H_d:=a_d^*D$. The map $p_d$ is a finite  connected \'etale cover, in particular $S_d$ is minimal of general type.
\smallskip

(2) choose a general pencil $\Lambda_d\subset |H_d|$ and let $f_d\colon Y_d\to \pp^1$ be the relatively minimal fibration obtained by resolving the indeterminacy of the rational map $S_d\to \pp^1$ defined by $\Lambda_d$.
\smallskip

The key observation  of  this  proof is that as $d$ goes to infinity, the intersection numbers $H^2_d$ and $K_{S_d}H_d$ grow slower than  $K^2_{S_d}$ and $\chi(S_d)$. 
As a consequence,  the slope of $f_d$  
converges to the ratio
$K^2_S/\chi(S)$ as $d$ goes to infinity. Since  $g_d$ goes to infinity with $d$,  the Severi 
inequality can be obtained  by applying the   slope inequality (Theorem \ref{thm:slope}) to the fibrations 
$f_d$ and taking the limit for $d\to \infty$.

\subsection{Remarks, refinements and open questions}\label{ssec:remarks}\ \par

 (a) \underline{Chern numbers of surfaces with $\albdim=2$.}\\
 The Severi inequality is sharp, since double covers of an abelian surface branched on a smooth ample curve satisfy $K^2=4\chi$ (cf. \S \ref{ssec:examples}, (d)). Actually, in \cite{Manetti_Severi} it is proven that these are the only surfaces with $K$ ample, $\albdim=2$ and $K^2=4\chi$.  Hence it is natural to conjecture that the canonical models of surfaces with $\albdim=2$ and $K^2=4\chi$ are double covers of abelian surfaces branched on an ample curve with  at most simple singularities.
 
 In addition, by Proposition \ref{prop:density}, the ratios $K^2_S/\chi(S)$ for $S$ a minimal surface  with $\albdim S=2$ are dense in all the admissible interval $[4,9]$.
 \smallskip

(b) \underline{Refinements of the inequality.} 

Assume that the differential of the Albanese map $a\colon S\to A$ is non singular outside a finite set. Then the cotangent bundle $\Omega^1_S$ is nef and $L^3=2(K^2_S-6\chi(S))\ge 0$. This remark suggests that one may expect an inequality stronger than Theorem \ref{thm:Severi} to hold under some  assumption on $a$, e.g., that $a$ be birational.
A  possible way of obtaining a result of this type would be to apply in the  proof of \cite{Rita_Severi}  one of the refined  versions  of the slope inequality (cf. \S \ref{ssec:slope_remarks}, (b) and (c)). Unfortunately, one has very little control on the general fiber of the fibrations $f_d\colon Y_d\to \pp^1$ constructed in the proof.

Analogously, by the result of Manetti mentioned in (a), it is natural to expect that a better bound holds for surfaces with $q>2$ (cf. \cite[\S 7]{Manetti_Severi} for a series of conjectures). A step in this direction is the following:
\begin{thm}[\cite{RitaMarg_Severi}]\label{thm:RM_Severi}
Let $S$ be a smooth surface of maximal Albanese dimension  and irregularity  $q\ge 5$ with $K_S$ ample. Then:
 $$K^2_S\ge 4\chi(S)+\frac{10}{3}q-8.$$
Furthermore,   if $S$ has no irrational pencil and  the Albanese map $a\colon S\to A$ is unramified in codimension 1, then 
$$K^2_S\ge 6\chi(S)+2q-8.$$
\end{thm}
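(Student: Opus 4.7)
The natural plan is to upgrade the limit-of-fibrations argument of \cite{Rita_Severi} (sketched in \S\ref{ssec:Severi_proofs}(b)) by replacing the bare slope inequality with a refinement that sees the irregularity. Start from Pardini's tower of étale covers $p_d\colon S_d\to S$ and associated pencils $f_d\colon Y_d\to \pp^1$. Since $a\circ p_d=\mu_d\circ a_d$ and $\mu_d$ is an isogeny, $A$ remains the Albanese variety of $S_d$; hence $q(S_d)=q$ for every $d$, and the relative irregularity $q_{f_d}=q(Y_d)-g(\pp^1)=q$ is \emph{constant} along the tower, while $g_d\to\infty$ and $\chi(S_d)\sim d^{2q}\chi(S)$. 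This constancy of $q_f$ is precisely what can produce an additive --- rather than merely multiplicative --- refinement of Severi's inequality in the limit.

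For the first inequality, one feeds into Pardini's argument a slope inequality for non-isotrivial fibrations of positive relative irregularity of the shape
\begin{equation*}
K^2_f \;\ge\; 4\chi_f \;+\; \tfrac{10}{3}\,q_f(g-1) \;+\; (\text{lower order in }g),
\end{equation*}
along the lines studied in \cite{BarjaStoppino} and alluded to in \S\ref{ssec:slope_remarks}(b). Dividing by $\chi(S_d)$ and letting $d\to\infty$, and using Pardini's estimates that $H_d^2$ and $K_{S_d}H_d$ grow strictly slower than $K^2_{S_d}$ and $\chi(S_d)$, the additive correction $\tfrac{10}{3}q-8$ is read off from the leading asymptotics. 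The threshold $q\ge 5$ enters because the refined slope bound becomes numerically effective only once $q_f$ is large enough to dominate the lower-order terms.

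The second inequality admits a more direct projective-bundle treatment in the spirit of Manetti's proof (case (a) of \S\ref{ssec:Severi_proofs}). The hypothesis that $a$ is unramified in codimension 1 means that the evaluation map $a^*\Omega^1_A=\OO_S^{\oplus q}\to \Omega^1_S$ is surjective outside a finite set $Z\subset S$, so the tautological class $L$ on $\pp(\Omega^1_S)$ is nef away from $\pi^{-1}(Z)$. Combining \eqref{eq:manetti} with the identities $L^3=K_S^2-c_2(S)=2K_S^2-12\chi(S)$, one already obtains $K_S^2\ge 6\chi(S)$ up to a correction supported on $\pi^{-1}(Z)$. The absence of irrational pencils then kills, via Castelnuovo--De Franchis (Theorem~\ref{thm:CDF}), the decomposable horizontal subcycles of a general intersection $L_1\cap L_2$ that would otherwise spoil positivity, and a careful bookkeeping of the local contribution of $\pi^{-1}(Z)$ --- linear in $q$, since $|Z|$ is bounded by a multiple of $q$ --- gives the additive term $2q-8$, with $K_S$ ample ensuring there are no further negative contributions from contracted curves.

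The main obstacle lies in pinning down the precise numerical coefficients. For the first inequality, the slope refinements readily available in the literature tend to give \emph{multiplicative} improvements, which collapse back to the plain Severi bound in the Pardini limit; producing an additive refinement of the strength $\tfrac{10}{3}q\chi_f$ requires a bespoke analysis of Xiao's Harder--Narasimhan filtration of $f_*\omega_f$, isolating the subsheaf generated by the pullbacks from $A$ and quantifying its contribution. For the second, matching the exact constant $2q-8$ demands a sharp upper bound on $|Z|$ in terms of $q$, which is subtle because $Z$ consists of those points at which the Albanese differential degenerates even though codimension-1 ramification has been excluded by hypothesis.
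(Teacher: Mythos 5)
Your route to the first inequality does not work, and you essentially concede as much in your closing paragraph. In Pardini's tower one has $\chi(S_d)=d^{2q}\chi(S)$, while $H_d^2$ and $K_{S_d}H_d$ --- and hence $g_d-1=\tfrac12\bigl(H_d^2+K_{S_d}H_d\bigr)$ --- grow strictly slower (this is exactly the ``key observation'' of \S\ref{ssec:Severi_proofs}). Consequently any correction of order $q_f(g_d-1)$ in a refined slope inequality, once divided by $\chi(S_d)$, tends to $0$, and the limit collapses back to the plain bound $K^2\ge 4\chi$. More structurally, an additive term such as $\tfrac{10}{3}q-8$ cannot be seen by a limit over \'etale covers at all: $K^2$ and $\chi$ scale with the degree of the cover while $q$ does not, so the correction is divided away. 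The paper flags precisely this obstruction in \S\ref{ssec:remarks}, (b) (``one has very little control on the general fiber of the fibrations $f_d$''), and the actual proof of \emph{both} inequalities of Theorem \ref{thm:RM_Severi} goes instead through Manetti's argument on $\pp(\Omega^1_S)$: one uses the decomposition $L_1\cap L_2=V+\Ga_0+\Ga_1+\Ga_2$ of \S\ref{ssec:Severi_proofs}, (a) and proves a lower bound for the term $L\Ga_2$ by geometric arguments; this is also the only reason the hypothesis that $K_S$ be ample appears.

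Your sketch of the second inequality is at least aimed in the right direction, since it is the Manetti-style argument, and the identity $L^3=2\bigl(K^2_S-6\chi(S)\bigr)$ together with the nefness of $\Omega^1_S$ (which is what ``unramified in codimension 1'' buys) correctly accounts for the leading term $6\chi$. But the step that is supposed to produce $2q-8$ is not there: you attribute it to bookkeeping over $\pi^{-1}(Z)$ together with a bound $|Z|=O(q)$, and no such bound on the degeneracy locus is available or relevant. The correction comes, as in the first inequality, from a lower bound on the intersection of $L$ with the horizontal components of $L_1\cap L_2$ lying over curves where $da$ has maximal rank, with the no-irrational-pencil hypothesis entering through the injectivity of $\wedge^2H^0(\Omega^1_S)\to H^0(\omega_S)$. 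As written, both halves of your proposal are missing their key quantitative lemma, and the first half rests on a limiting mechanism that provably cannot yield an additive refinement.
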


Theorem \ref{thm:RM_Severi} is proven by   using geometrical arguments to give a lower bound  for  the term $L\Ga_2$ in the proof of the Se veri inequality for $K$ ample given in \cite{Manetti_Severi} (cf. \S \ref{ssec:Severi_proofs}, (a)). This is  why one needs to assume $K$ ample. In \cite{RitaMarg_Severi}, it is also  shown that the bounds of Theorem \ref{thm:RM_Severi} can be sharpened if one assumes that the Albanese map or the canonical map of $S$  is not birational.
\smallskip

(c) \underline{Generalizations to higher dimensions.}

The proof of Theorem \ref{thm:Severi} given in \cite{Rita_Severi} (cf. \S \ref{ssec:Severi_proofs}) would work in arbitrary dimension $n$ if one had a slope inequality for varieties of dimension $n-1$. For instance, using the results of \cite{Barja_threefolds}, one can prove:

\begin{thm}\label{thm:Severi_3folds}
Let $X$ be a smooth projective threefold such that $K_X$ is nef and big and $\albdim X=3$. Then:
\begin{enumerate}
\item $K^3_X\ge 4\chi(\omega_X)$;
\item if $\Alb(X)$ is simple,  then $K^3_X\ge 9\chi(\omega_X)$.
\end{enumerate}
\end{thm}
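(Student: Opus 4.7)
The plan is to adapt to dimension three the limiting argument of \S\ref{ssec:Severi_proofs}(b), replacing the surface slope inequality by the slope inequality for threefolds fibred over a curve from \cite{Barja_threefolds}, and using the Severi inequality (Theorem \ref{thm:Severi}) applied to a general fiber of an auxiliary pencil.

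Set $A:=\Alb(X)$ and fix a very ample symmetric divisor $D$ on $A$. For each integer $d\ge 1$, form the cartesian square
\begin{equation*}
\begin{CD}
X_d @>{p_d}>> X \\
@V{a_d}VV @VV{a}V \\
A @>{\mu_d}>> A
\end{CD}
\end{equation*}
where $\mu_d$ is multiplication by $d$. The map $p_d$ is a connected \'etale cover of degree $d^6$, so $X_d$ is smooth with $K_{X_d}$ nef and big and $\albdim X_d=3$. Put $H_d:=a_d^*D$. Since $\mu_d^*D\equiv d^2D$ one has $p_d^*(a^*D)\equiv d^2H_d$, and a direct computation of triple intersections yields
\begin{equation*}
K_{X_d}^3=d^6 K_X^3,\ \ K_{X_d}^2\cdot H_d=d^4 K_X^2\cdot a^*D,\ \ K_{X_d}\cdot H_d^2=d^2 K_X\cdot(a^*D)^2,\ \ H_d^3=(a^*D)^3,
\end{equation*}
together with $\chi(\omega_{X_d})=d^6\chi(\omega_X)$. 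Thus the intersection numbers mixing $K_{X_d}$ and $H_d$ grow strictly slower in $d$ than do $K_{X_d}^3$ and $\chi(\omega_{X_d})$.

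Next choose a general pencil $\Lambda_d\subset|H_d|$ and resolve its base locus to obtain a relatively minimal fibration $f_d\colon Y_d\to\pp^1$ whose general fiber $F_d$ is a smooth surface in $|H_d|$. By adjunction $K_{F_d}=(K_{X_d}+H_d)|_{F_d}$ is nef and big, so $F_d$ is of general type; by the Lefschetz hyperplane theorem $H^1(X_d)\iso H^1(F_d)$, so $q(F_d)=3$, the Albanese map of $F_d$ is the restriction of $a_d$, $\albdim F_d=2$, and $\Alb(F_d)$ is isogenous to $A$. A Riemann--Roch computation on $X_d$ gives $K_{F_d}^2=O(d^4)$ and $\chi(\omega_{F_d})=O(d^4)$, while careful bookkeeping of the blowups and of the formula for $\deg f_*\omega_f$ of a threefold fibration produces the asymptotics
\begin{equation*}
K_{f_d}^3=d^6 K_X^3+O(d^4),\qquad \deg(f_d)_*\omega_{f_d}=d^6\chi(\omega_X)+O(d^4),
\end{equation*}
so the slope $\lambda(f_d):=K_{f_d}^3/\deg(f_d)_*\omega_{f_d}$ tends to $K_X^3/\chi(\omega_X)$ as $d\to\infty$. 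Applied to $f_d$, the slope inequality of \cite{Barja_threefolds} gives a lower bound on $\lambda(f_d)$ in terms of the invariants of $F_d$; plugging in Severi's bound $K_{F_d}^2\ge 4\chi(\omega_{F_d})$ (Theorem \ref{thm:Severi}) yields $\lambda(f_d)\ge 4+o(1)$, and passing to the limit proves (i). For (ii), the simplicity of $A$, which is inherited by $\Alb(F_d)$, rules out all irrational pencils of $F_d$ whose base has Jacobian of dimension less than $3$; this extra geometric constraint on the fibers feeds into either a strengthened Severi-type bound or a refined form of Barja's slope inequality, pushing the lower bound from $4+o(1)$ to $9+o(1)$.

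The main obstacle is twofold. First, controlling the $O(d^4)$ error terms in $K_{f_d}^3$ and $\deg(f_d)_*\omega_{f_d}$: the blowups needed to resolve the base locus of $\Lambda_d$ and the Riemann--Roch formula on a threefold (which, unlike the surface case, carries the full second Chern class) contribute numerous lower-order terms that must all be shown to be $O(d^4)$ and not $O(d^6)$. Second, identifying exactly which form of the inequality in \cite{Barja_threefolds} must be paired with which fiberwise input to produce the constants $4$ and $9$. The jump from $4$ to $9$ under the simple-Albanese hypothesis is the most delicate point: one must extract from the simplicity of $A$ a strong enough statement about the geometry of the fibers $F_d$ (a significantly improved Severi-type inequality, or the absence of low-genus fibrations on $X_d$ itself), and check that Barja's machinery accommodates it with the claimed numerical constant.
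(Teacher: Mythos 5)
Your strategy is the one the paper actually uses: pull back by the multiplication maps $\mu_d$ on $A=\Alb(X)$, take a general pencil in $|H_d|$, observe that the mixed intersection numbers of $K$ and $H_d$ grow like $d^4$ while $K^3$ and $\chi(\omega)$ grow like $d^6$, so the slope $\lambda_2(f_d)$ converges to $K_X^3/\chi(\omega_X)$, and then invoke the lower bounds of \cite{Barja_threefolds} for fibred threefolds over a curve. For part (i) this is exactly the paper's argument; the statement cited there (\cite[Thm. 3.1,(i)]{Barja_threefolds}) already gives $\lambda_2(f_d)\ge 4$ once the general fibre has maximal Albanese dimension, so there is no need to feed the surface Severi inequality in by hand, though your version of that step is harmless.

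For part (ii) there are two genuine gaps. First, the decisive input is \cite[Thm. 3.1,(ii)]{Barja_threefolds}, whose hypothesis is that the general fibre has \emph{no irrational pencil}; your ``either a strengthened Severi-type bound or a refined form of Barja's slope inequality'' leaves exactly this citation unresolved, and it is the only thing that produces the constant $9$. Second, and more substantively, the simplicity of $A$ is \emph{not} automatically inherited by $\Alb(F_d)$: the \'etale cover $X_d\to X$ can have $q(X_d)>q(X)$ (hence $\Alb(X_d)$ strictly larger than an isogeny image of $A$, and possibly non-simple) whenever some nontrivial $d$-torsion point of $\Pic^0(X)$ lies in $V^1(X)=\{P\,:\,h^1(-P)>0\}$. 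The paper disposes of this via the Green--Lazarsfeld generic vanishing theorem: since $A$ is simple, $V^1(X)$ is finite, so for infinitely many $d$ no such torsion point occurs; for those $d$ one has $q(Y_d)=q(X)$, $\Alb(F_d)$ isogenous to the simple abelian variety $A$, and hence $F_d$ without irrational pencils (any irrational pencil would give a surjection of $\Alb(F_d)$ onto a positive-dimensional proper quotient, contradicting simplicity since $\albdim F_d=2$). Note also that $q(F_d)=q(X_d)=q(X)\ge 3$, not necessarily $3$. With these two points supplied, your argument coincides with the paper's.
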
 
\begin{proof} We may assume $\chi(\omega_X)>0$.
(Recall $\chi(\omega_X)\ge 0$ by 
\cite[Cor. to Thm.1]{GreenLazarsfeld1}). 
Consider  a fibered threefold $f\colon Y\to B$, where $B$ is a smooth curve, denote by $F$ a general fiber and assume $\chi_f:=\chi(\omega_X)-\chi(\omega_B)(\chi(\omega_F)\ne 0$. Following \cite{Barja_threefolds}  we define:
  $$\lambda_2(f):=\frac{(K_X-f^*K_B)^3}{\chi_f}.$$

  We apply the same construction as in \S \ref{ssec:Severi_proofs}, (b) to get for every positive integer $d$ a smooth fibered threefold $f_d\colon Y_d \to\pp^1$ such that  $\lambda_2(f_d)$ is defined for $d>>0$ and converges to $\frac{K^3_X}{\chi(\omega_X)}$ for $d\to \infty$. Statement (i) now follows by applying  \cite[Thm. 3.1,(i)]{Barja_threefolds} to $f_d$ and taking the limit for $d\to\infty$.
  
  Statement (ii) requires a little more care. The threefold $Y_d$ is the blow up along a smooth curve of an \'etale cover $Z_d\to X$. Since $A:=\Alb(X)$ is simple, $V^1(X):=\{P\in \Pic^0(X)|h^1(-P)>0\}$ is a finite set by the Generic Vanishing theorem of \cite{GreenLazarsfeld1}. Then there are infinite values of $d$ such that  $dP\ne 0$ for every $P\in V^1(X)$.   For those values   $q(Y_d)=q(Z_d)=q(X)$ and $A:=\Alb(X)$ and $\Alb(Y_d)=\Alb(Z_d)$ are isogenous, in particular $\Alb(Z_d)$ is simple. By construction, the general fiber  $F_d$ of $f_d$ is a surface of  maximal Albanese dimension. In addition, since $F_d$ is isomorphic to an element of the nef and big linear system $|H_d|$ (notation as in \S \ref{ssec:Severi_proofs}, (b)), it follows that $q(F_d)=q(X)$ and $\Alb(F_d)$ is isogenous to the simple abelian variety  $\Alb(Y_d)$. Hence $F_d$ has no irrational pencil and we get statement (ii) by applying  \cite[Thm. 3.1,(ii)]{Barja_threefolds} and taking the limit for $d\to\infty$.
  \end{proof}
\begin{remark} In order to keep the proof of Theorem \ref{thm:Severi_3folds} simple, we have made stronger assumptions than necessary: for instance one can assume that $X$ has terminal singularities and, with some more work, one can eliminate the assumption that $\Alb(X)$ be simple in (ii).

\end{remark}

\subsection{Applications}

  We  use   the following result due to Xiao Gang:
\begin{prop}[\cite{Xiao_slope}] \label{prop:Xiao_pi1}
Let $f\colon S\to B$ be a relatively minimal fibration with fibers of genus $g\ge 2$. If $\lambda(f)<4$ and $f$ has no multiple fibers, then there is an exact sequence:
$$1\to N\to \pionealg(S)\to \pionealg(B)\to 1,$$
where $|N|\le 2$.
\end{prop}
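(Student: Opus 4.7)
The statement has two parts: producing the exact sequence, and bounding $|N|\le 2$.

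For the exact sequence, the plan is to use the standard étale fundamental-group sequence of a proper morphism with connected fibers,
\[
\pionealg(F)\xrightarrow{\iota_{*}}\pionealg(S)\xrightarrow{f_{*}}\pionealg(B)\to 1,
\]
where $\iota\colon F\to S$ is the inclusion of a general smooth fiber. Setting $N:=\iota_{*}\pionealg(F)$, the kernel of $f_{*}$ is exactly $N$. The absence of multiple fibers is essential here: a fiber of multiplicity $m$ would enlarge the target by a $\Z/m$ quotient on the right, obstructing the exactness and giving a nontrivial cokernel.

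To bound $|N|$, I would argue by contradiction using étale covers and the slope inequality (Theorem \ref{thm:slope}). Given any $\pionealg(S)$-equivariant finite quotient $G$ of $N$ of order $d\ge 2$ (which exists for $|N|\ge 2$, by intersecting $\pionealg(S)$-conjugates of an open subgroup of $N$), the preimage of $\ker(N\twoheadrightarrow G)$ in $\pionealg(S)$ defines a connected Galois étale cover $\pi_d\colon S_d\to S$ of degree $d$, set up so that $S_d\to B$ still has connected fibers. The pull-back $F_d$ of a general fiber $F$ is then a connected degree-$d$ étale cover of $F$, so $g_d=d(g-1)+1$. Since $K^2_{S_d}=dK^2_S$, $\chi(S_d)=d\chi(S)$ and the base is unchanged, one computes
\[
\lambda(f_d)=\frac{dK^2_S-8(b-1)d(g-1)}{d\chi(S)-(b-1)d(g-1)}=\lambda(f)<4.
\]
Applying Theorem \ref{thm:slope} to $f_d$ (which is not smooth isotrivial since $\chi_{f_d}=d\chi_f>0$) yields $\lambda(f)\ge 4d(g-1)/(d(g-1)+1)$; letting $d\to\infty$ would force $\lambda(f)\ge 4$, a contradiction. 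Hence $N$ has only finitely many finite quotients, all of bounded order, so $|N|$ itself is finite and bounded above in terms of $\lambda(f)$ and $g$.

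The main obstacle is sharpening this to the optimal constant $|N|\le 2$. The slope inequality alone does not eliminate $d=3,4$ when $g$ is small (for $g=2$, $d=3$ only demands $\lambda\ge 3$). Closing this gap requires entering Xiao's proof of Theorem \ref{thm:slope}: one tracks the Harder--Narasimhan filtration of $(f_d)_{*}\omega_{f_d}$ and its interaction with Clifford's theorem applied to the fibers $F_d$, showing that a $d$-fold cover with $d\ge 3$ would produce a destabilising subsheaf of $(f_d)_*\omega_{f_d}$ whose numerical invariants are incompatible with $\lambda(f_d)<4$. The surviving case $d=2$ corresponds exactly to a fiberwise hyperelliptic involution, consistent with the fact that fibrations close to the lower slope bound $4(g-1)/g$ are hyperelliptic.
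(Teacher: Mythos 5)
The paper itself offers no proof of this proposition --- it is quoted directly from Xiao's article \cite{Xiao_slope} --- so there is no in-paper argument to compare yours with; I am judging your proposal on its own terms. Two of your three steps are sound. The homotopy sequence $\pionealg(F)\to\pionealg(S)\to\pionealg(B)\to 1$ is exact in the middle precisely because $f$ has no multiple fibers, giving the sequence with $N$ the image of $\pionealg(F)$ (one small correction: multiple fibers would not produce a ``nontrivial cokernel'' --- surjectivity onto $\pionealg(B)$ holds whenever the fibers are connected --- they would destroy exactness in the middle). Your finiteness argument for $N$ is also correct and clean: for a connected \'etale cover of $S$ whose general fiber is a connected degree-$d$ cover of $F$, the slope is unchanged while the fiber genus becomes $d(g-1)+1$, so Theorem \ref{thm:slope} forces $4d(g-1)/(d(g-1)+1)\le\lambda(f)<4$, i.e.\ $d(g-1)<\lambda(f)/(4-\lambda(f))$, and since the subgroups $N\cap H$, $H$ open in $\pionealg(S)$, are cofinal among open subgroups of $N$, this bounds $|N|$.

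The gap is that the statement to be proved is $|N|\le 2$, and your argument does not reach it. The bound you actually obtain, $d(g-1)<\lambda(f)/(4-\lambda(f))$, blows up as $\lambda(f)\to 4^-$: for $g=2$ and $\lambda(f)=3.9$ it permits $d$ as large as $38$. You acknowledge this, but the paragraph offered in place of a proof is not an argument: you assert that a cover of fiber degree $d\ge 3$ ``would produce a destabilising subsheaf of $(f_d)_*\omega_{f_d}$ whose numerical invariants are incompatible with $\lambda(f_d)<4$'' without identifying the subsheaf, its rank and degree, or the inequality it is supposed to violate. (The natural candidate is the direct summand $f_*\omega_f\subset f_*\bigl(\omega_f\otimes\pi_*\OO_{S_d}\bigr)=(f_d)_*\omega_{f_d}$, of rank $g$ and degree $\chi_f$ inside a bundle of rank $d(g-1)+1$ and degree $d\chi_f$; exploiting it requires the quantitative form of Xiao's method, namely his lower bound for $K_f^2$ in terms of the Harder--Narasimhan slopes of $f_*\omega_f$, not merely the statement of Theorem \ref{thm:slope}.) As written, your proof establishes the exact sequence and the finiteness of $N$ with an effective bound depending on $\lambda(f)$ and $g$, but not the constant $2$ that is the point of the proposition.
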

We also need the following consequence of the Severi inequality and of the slope inequality.
\begin{lem} \label{lem:consequence}
Let $S$ be  a minimal regular surface of general type $S$ with $K^2_S<4\chi(S)$ that has   an irregular  \'etale cover $S'\to S$.
 Then $S$ has a pencil 
$f\colon S\to\pp^1$ such that:
\begin{enumerate}
\item $f$ has multiple fibers $m_1F_1,\dots m_kF_k$  with    $\sum_j \frac{m_j-1}{m_j}\ge 2$;
\item the general fiber of $f$ has genus $g\le 1+\frac{K^2_S}{4\chi(S)-K^2_S}$.
\end{enumerate}
\end{lem}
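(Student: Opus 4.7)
My plan is to use the irregular étale cover together with the Severi inequality to produce an Albanese pencil on $S'$, descend it to a pencil $f\colon S\to\pp^1$, and then derive (i) from Riemann--Hurwitz for the induced map of bases and (ii) from the slope inequality applied to $f'$.

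Replacing $S'$ by its Galois closure over $S$ (still étale, irregular, and minimal of general type with the same ratio $K^2/\chi<4$), I may assume $p\colon S'\to S$ is étale Galois with group $G$ of order $d$. The Severi inequality (Theorem \ref{thm:Severi}) applied to $S'$ forces $\albdim(S')\le 1$; together with $q(S')\ge 1$ this gives $\albdim(S')=1$, so $S'$ has an Albanese pencil $f'\colon S'\to B'$ with $b':=g(B')\ge 1$. By functoriality of $\Alb$, the group $G$ acts on $B'$ making $f'$ equivariant, and since $S=S'/G$ is regular, $B'/G$ has genus $0$. Writing $H\le G$ for the kernel of $G\to\Aut(B')$, the general fibre of the descended morphism $S\to B'/G$ is $(f')^{-1}(b')/H$, which is connected; hence $f\colon S\to B'/G\cong\pp^1$ is a pencil. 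Letting $\pi\colon B'\to\pp^1$ be the quotient, of degree $n=|G/H|$, one obtains the commutative square
\begin{equation*}
\begin{CD}
S' @>p>> S \\
@Vf'VV @VVfV \\
B' @>\pi>> \pp^1.
\end{CD}
\end{equation*}

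For (i), fix a multiple fibre $m_jF_j$ of $f$ over $P_j\in\pp^1$, with $F_j$ reduced. On $S'$ one has
\begin{equation*}
m_j\,p^*F_j \;=\; p^*(f^*P_j) \;=\; (f')^*\pi^*P_j \;=\; \sum_{P'\in\pi^{-1}(P_j)} e_{P'}\,(f')^*P',
\end{equation*}
and the left-hand side is $m_j$ times a reduced divisor (since $p$ is étale). Comparing coefficients of irreducible components forces $e_{P'}\cdot n_{P'}=m_j$, where $e_{P'}$ is the ramification index of $\pi$ at $P'$ and $n_{P'}$ is the multiplicity of the fibre $(f')^*P'$. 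In particular $e_{P'}\mid m_j$, so $\pi$ is unramified over reduced fibres of $f$; and with $s_j:=\#\pi^{-1}(P_j)$, the bound $e_{P'}\le m_j$ yields $\sum_{P'\mapsto P_j}(e_{P'}-1)=n-s_j\le n(m_j-1)/m_j$. Riemann--Hurwitz for $\pi$ together with $g(B')\ge 1$ then gives
\begin{equation*}
2n\;\le\;2g(B')-2+2n\;=\;\sum_j(n-s_j)\;\le\;n\sum_j\frac{m_j-1}{m_j},
\end{equation*}
which is (i).

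For (ii), let $g$ and $\tilde g$ denote the genera of the general fibres of $f$ and $f'$. Because $H$ acts freely on $S'$ (as $S'\to S$ is étale) and preserves $(f')^{-1}(b')$, Riemann--Hurwitz for the étale quotient $(f')^{-1}(b')\to(f')^{-1}(b')/H$ gives $\tilde g=1+|H|(g-1)\ge g$. One has $g\ge 2$: otherwise $S$ would be ruled ($g=0$), or, by the canonical bundle formula for relatively minimal genus-$1$ fibrations on a minimal surface, $K^2_S=0$ ($g=1$). Likewise $f'$ is not smooth isotrivial, since Serrano's theorem would then force $K^2_{S'}=8\chi(S')$, contradicting the hypothesis. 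Applying the slope inequality (Theorem \ref{thm:slope}) to $f'$ and using $K^2_{S'}=dK^2_S$, $\chi(S')=d\chi(S)$, one obtains after rearrangement
\begin{equation*}
d\bigl[\tilde gK^2_S-4(\tilde g-1)\chi(S)\bigr]\;\ge\;4(\tilde g-1)(b'-1)(\tilde g+1)\;\ge\;0.
\end{equation*}
Hence $\tilde gK^2_S\ge 4(\tilde g-1)\chi(S)$, which (using $4\chi(S)-K^2_S>0$) gives $\tilde g\le 1+K^2_S/(4\chi(S)-K^2_S)$; combined with $g\le\tilde g$ this yields (ii).

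The main difficulty is the local divisor computation behind the relation $e_{P'}n_{P'}=m_j$ in step (i), which tightly links the multiple fibres of $f$ to the ramification of $\pi$; once this is established, what remains is Riemann--Hurwitz and a direct application of the slope inequality.
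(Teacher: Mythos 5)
Your proof is correct and follows essentially the same route as the paper's: pass to the Galois closure, use the Severi inequality to produce the Albanese pencil on $S'$, descend it via the $G$-action to a pencil $f\colon S\to \pp^1$, control the multiple fibres through the ramification of $B'\to \pp^1$ and Riemann--Hurwitz for (i), and apply the slope inequality to the Albanese pencil of $S'$ for (ii). The only cosmetic difference is in (i), where the paper argues that each branch point of $B'\to\pp^1$ yields a multiple fibre because the point-stabilizer acts freely on the corresponding fibre of $f'$, whereas you compare pullback divisors to get $e_{P'}n_{P'}=m_j$; both give the same Hurwitz estimate.
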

\begin{proof}
Up to passing to the Galois closure, we may assume that $S'\to S$ is  a Galois cover with Galois group $G$. 
By the Severi inequality (Theorem \ref{thm:Severi}) the Albanese map of $S'$ is a pencil $a'\colon S'\to B$, where $B$ has genus $b>0$. Clearly $G$ acts on $f$  and on $B$, inducing a  pencil $f\colon S\to B/G$. Since $S$ is regular, $B/G$ is isomorphic to $\pp^1$.  Denote by $\ol G$ the quotient of $G$ that acts effectively on $B$.  Let $y\in B$ be a ramification point of order $\nu$ of the map $B\to B/\ol G=\pp^1$ and let $H<\ol G$ be the stabilizer  of  $y$. The group $H$ is cyclic of order $\nu$ and it acts freely on the fiber   $F_y$ of $a'$ over $y$. It follows that the fiber of $f$ over the image $x$  of $y$ is  a multiple fiber of multiplicity divisible by $\nu$. Let $x_1,\dots x_r\in \pp^1$ be the critical values of $B\to B/\ol G$, let $\nu_i$ be the ramification order of $x_i$ and let $m_1F_1,\dots m_kF_k$ be the multiple fibers of $f$.
Then by the Hurwitz formula  we have:
$$\sum _j\frac{m_j-1}{m_j}\ge \sum_j\frac{\nu_i-1}{\nu_i}=\frac{2b-2}{|\ol G|}+2\ge 2,$$
hence (i) is proven.

To prove (ii), we observe that the fibers of $f$ are quotients (possibly by a trivial action) of the fibers of $a$, hence $g\le \ga$, where $\ga$ is the genus of a general fiber of $a$. In turn,  by the slope inequality one has:
$$\frac{K^2_S}{\chi(S)}\ge \lambda(a)\ge \frac{4(\ga-1)}{\ga},$$
which gives  the required bound  
$$g\le \ga\le 1+\frac{K^2_S}{4\chi(S)-K^2_S}.$$
\end{proof}

(a) \underline{Reid's conjecture for irregular surfaces.}

The Severi inequality implies  Reid's conjecture \ref{conj:Reid}  for irregular surfaces and, more generally,  surfaces that have an irregular \'etale cover:
\begin{prop}\label{prop:Reid}
Let $S$ be a minimal surface of general type with $K^2_S<4\chi(S)$. If $S$ has an irregular cover, then there exists a finite \'etale cover $S'\to S$ and a pencil $f\colon S' \to B$ that induces an exact sequence:
$$0\to N\to \pionealg(S')\to  \pionealg(B)\to 0,$$
where $|N|\le 2$.
\end{prop}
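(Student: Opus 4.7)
The plan is to apply Xiao's Proposition \ref{prop:Xiao_pi1} to a pencil on a suitable \'etale cover of $S$, after killing all multiple fibers by an orbifold base change.

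\emph{Producing a pencil.}  By the Severi inequality (Theorem \ref{thm:Severi}) the assumption $K^2_S<4\chi(S)$ forces $\albdim(S)\le 1$.  If $S$ is irregular, take $f\colon S\to B$ to be the Albanese pencil, so that $b:=g(B)=q(S)\ge 1$; if $S$ is regular, take $f\colon S\to\pp^1$ to be the pencil produced by Lemma \ref{lem:consequence}, whose multiplicities $m_1,\dots,m_k$ satisfy $\sum_j(m_j-1)/m_j\ge 2$.  In either situation the genus $g$ of a general fiber is at least $2$: $g=0$ is impossible since $S$ is of general type, and an elliptic fibration is excluded by the Hodge index theorem applied to $K_S$ and a general fiber $F$ (as $F^2=K_SF=0$ would force $F\equiv 0$).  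Letting $p_j:=f(F_j)$, the $2$-orbifold $B^{\mathrm{orb}}:=(B;p_j,m_j)$ has Euler characteristic $\chi^{\mathrm{orb}}=2-2b-\sum_j(m_j-1)/m_j\le 0$.

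\emph{Killing the multiple fibers.}  Since $\chi^{\mathrm{orb}}\le 0$, $B^{\mathrm{orb}}$ is not among the two non-developable ``bad'' orbifolds and admits a finite smooth cover: there exist a smooth connected curve $\widetilde B$ and a finite map $\pi\colon\widetilde B\to B$ of some degree $d$, unramified outside the $p_j$ and with ramification index exactly $m_j$ at every preimage of $p_j$.  Let $S'$ be an irreducible component of $S\times_B\widetilde B$.  Near a multiple fiber $m_jF_j$, $f$ is locally $t=x^{m_j}$ and $\pi$ is $t=s^{m_j}$, so $S\times_B\widetilde B$ is locally a union of $m_j$ smooth sheets $\{x=\zeta s\}$ meeting along $\{x=s=0\}$, each sheet mapping isomorphically onto a neighborhood of $F_j$ in $S$.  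Hence $S'$ is smooth and $S'\to S$ is \'etale of some degree $d'$, so $S'$ is minimal of general type with $K^2_{S'}=d'K^2_S$, $\chi(S')=d'\chi(S)$; the induced relatively minimal fibration $f'\colon S'\to\widetilde B'$ (Stein factorization of $S'\to\widetilde B$) has general fiber of genus $g$ and no multiple fibers.

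\emph{Slope estimate and conclusion.}  With $\widetilde b':=g(\widetilde B')$,
\begin{equation*}
\lambda(f')=\frac{d'K^2_S-8(g-1)(\widetilde b'-1)}{d'\chi(S)-(g-1)(\widetilde b'-1)}.
\end{equation*}
The inequality $\lambda(f')<4$ is equivalent to $d'(K^2_S-4\chi(S))<4(g-1)(\widetilde b'-1)$, which holds because the left-hand side is negative and the right-hand side is non-negative (if $b\ge 1$ this is clear; if $B=\pp^1$, Hurwitz for $\pi$ gives $2(\widetilde b-1)=d(\sum_j(m_j-1)/m_j-2)\ge 0$).  Moreover $\chi_{f'}>0$, else Theorem \ref{thm:fujita} would give $f'$ smooth isotrivial, forcing $K^2_{S'}=8\chi(S')$ and contradicting $K^2_{S'}<4\chi(S')$.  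Thus $f'$ is relatively minimal of genus $g\ge 2$, has no multiple fibers, and $\lambda(f')<4$, and Proposition \ref{prop:Xiao_pi1} supplies the desired exact sequence
\begin{equation*}
1\to N\to \pionealg(S')\to \pionealg(\widetilde B')\to 1,\qquad |N|\le 2.
\end{equation*}

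The main delicate point is the construction of the \'etale cover $S'\to S$: one must exhibit a smooth finite cover $\widetilde B\to B$ realizing exactly the prescribed ramification orders and verify that the resulting base change yields an \'etale cover of $S$.  This rests on the classification of $2$-dimensional orbifolds (our inequality $\chi^{\mathrm{orb}}\le 0$ precisely excludes the two non-developable ``bad'' $\pp^1$-orbifolds) together with the local \'etaleness computation above matching ramification indices on the base with fiber multiplicities on $S$.
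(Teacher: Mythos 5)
Your argument is correct in substance and ends the same way the paper does (eliminate multiple fibres by a base change on the target curve, then invoke Proposition \ref{prop:Xiao_pi1}), but your route in the regular case is genuinely different. The paper first replaces $S$ by the given irregular \'etale cover $X$, applies the Severi inequality to $X$ so that its Albanese map is a pencil $a\colon X\to C$ over a curve of genus $q(X)\ge 1$, and only then performs a base change $B\to C$ killing the multiple fibres; since $g(C)\ge 1$ there is no ``bad orbifold'' to worry about, and the required cover of $C$ exists for elementary reasons. You instead stay on $S$ and, when $S$ is regular, use the pencil $f\colon S\to\pp^1$ supplied by Lemma \ref{lem:consequence}; the price is that you must appeal to the classification of $2$-orbifolds to produce $\widetilde B\to\pp^1$ with the prescribed ramification, and the inequality $\sum_j(m_j-1)/m_j\ge 2$ from that lemma is exactly what excludes the two non-developable $\pp^1$-orbifolds and, via Hurwitz, guarantees $g(\widetilde B)\ge 1$ so that $\lambda(f')<4$. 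Your version has the merit of spelling out the verifications the paper leaves implicit ($g\ge 2$, $\chi_{f'}>0$, the slope bound).

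One small correction: an irreducible component of $S\times_B\widetilde B$ need not be smooth. Your local model correctly shows that above a multiple fibre the fibre product is a union of $m_j$ smooth analytic sheets crossing along a curve, but global monodromy can force a single irreducible component to contain two of these sheets through the same point, where it is then singular. The standard fix --- and what the paper does --- is to take the normalization of (a connected component of) the fibre product: normalization separates the local sheets, each of which maps isomorphically onto a neighbourhood in $S$, so the normalized cover is finite and \'etale over $S$. With that adjustment the rest of your argument goes through unchanged.
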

\begin{proof}
Let $X\to S$ be an irregular  \'etale cover. By Theorem \ref{thm:Severi} the Albanese map of $X$ is a pencil $a\colon X\to C$ and,  if $S'\to X$ is \'etale, then the Albanese pencil of $S'$ is obtained  by pulling back the Albanese pencil of $X$ and  taking the Stein factorization.  So, up to taking a suitable base change $B\to C$ and normalizing, we can pass to an \'etale cover $S'\to S$ such that the Albanese pencil $a'\colon S'\to B$ has no multiple fiber. The statement now follows by applying Proposition \ref{prop:Xiao_pi1} to $a'$.
\end{proof}
\begin{remark} By Proposition \ref{prop:Reid}, to prove Reid's conjecture it is enough to show:

{\em   If $S$ is a surface with $K^2_S<4\chi(S)$ that has no irregular \'etale cover,  then $\pionealg(S)$ is finite.}

By the work of several authors, this is known to be true for $K^2<3\chi$ (cf. \cite{RitaMarg_JDG} and  references therein). Moreover in \cite{RitaMarg_JDG} and \cite{CiroRitaMarg} the following sharp  bound on the order of $\pionealg(S)$ is given:

{\em If $K^2_S<3\chi(S)$ and $S$ has no irregular \'etale cover, then $|\pionealg(S)|\le 9$. Furthermore, if  $|\pionealg(S)|=8$ or $9$ then $K^2_S=2$, $p_g(S)=0$ (namely, $S$ is a  Campedelli surface).}

Surfaces with $K^2=2$, $p_g=0$ and  $|\pionealg|=8$, 9 are classified in \cite{MilesRitaMarg}, respectively  \cite{RitaMarg_JEMS}.

However, all the above mentioned results make essential use of Ca\-stel\-nuo\-vo's inequality $K^2\ge 3\chi-10$ for surfaces  whose canonical map is not 2:1 onto a ruled surface. Hence, different methods are needed to deal with surfaces  with  $3\chi\le K^2<4\chi$.

\end{remark}

(b) \underline{Surfaces with ``small'' $K^2$.}

By Proposition \ref{prop:2chi}, a minimal irregular  surface of general type satisfies  $K^2\ge 2\chi$.
Irregular surfaces on or near  the line $K^2=2\chi$ have been  studied by several authors ( \cite{bombieri}, \cite{HorikawaV}, \cite{Miles_pi1}, \cite{Xiao_slope}, \cite{Xiao_hyperell}). As an  application of the Severi inequality and of the slope inequality, we give quick proofs of  some of these results.

\begin{prop}\label{prop:2chi_char}
Let $S$ be a minimal irregular surface of general type. Then:
\begin{enumerate}
\item  If $K^2_S=2\chi(S)$, then $q(S)=1$
 and the fibers of the Albanese pencil $a\colon S\to B$ have genus 2 (cf. Proposition \ref{prop:2chiq1});
\item if $K^2_S<\frac 8 3 \chi(S)$, then the Albanese map is a pencil of curves of genus 2;
\item if $K^2_S<3\chi(S)$, then the Albanese map is a pencil of hyperelliptic curves of genus $\le 3$.
\end{enumerate}
\end{prop}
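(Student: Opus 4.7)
My plan is to treat the three statements uniformly, reducing each to the slope inequality applied to the Albanese pencil.  In all three cases $K^2_S<4\chi(S)$, so by the Severi inequality (Theorem \ref{thm:Severi}) we have $\albdim(S)=1$, and the Albanese map is a pencil $a\colon S\to B$ where $B$ has genus $q(S)\ge 1$.  The general fiber of $a$ is a smooth curve of genus $g\ge 2$, since if $g\le 1$ then $S$ would be ruled or elliptic and could not be of general type.  Applying Proposition \ref{prop:2chiq1} to the irrational pencil $a$ gives
\[
\frac{K^2_S}{\chi(S)}\;\ge\;\lambda(a)\;\ge\;\frac{4(g-1)}{g},
\]
where the first inequality uses $K^2_S<8\chi(S)$ exactly as in the proof of Proposition \ref{prop:2chiq1}.

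Statement (i) is then the last assertion of Proposition \ref{prop:2chiq1}: the hypothesis $K^2_S=2\chi(S)$ forces $g=2$ and $b=1$, hence $q(S)=1$.  For (ii), rearranging $\tfrac{4(g-1)}{g}<\tfrac{8}{3}$ yields $g<3$, forcing $g=2$.  For (iii), the analogous computation with bound $3$ in place of $\tfrac{8}{3}$ yields $g<4$, so $g\in\{2,3\}$; when $g=2$ the general fiber is automatically hyperelliptic.

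The crux of (iii) is to rule out a non-hyperelliptic Albanese fibration of genus $3$.  For this I will invoke the refined slope inequality for fibrations whose general fiber has maximal Clifford index, namely $\lambda(f)\ge \tfrac{6(g-1)}{g+1}$, discussed in \S\ref{ssec:slope_remarks}(b).  A smooth non-hyperelliptic curve of genus $3$ (a plane quartic) has Clifford index $1$, which is maximal, so this hypothesis applies to $a$ under the assumption that the general fiber is non-hyperelliptic, giving $\lambda(a)\ge 3$.  Combined with $\lambda(a)\le K^2_S/\chi(S)<3$ this is a contradiction, so the general fiber of $a$ must be hyperelliptic.  The main obstacle, and the one delicate ingredient, is verifying that the refined slope bound is applicable in exactly the form needed for $g=3$; everything else reduces directly to Theorem \ref{thm:Severi}, Theorem \ref{thm:slope} and Proposition \ref{prop:2chiq1}.
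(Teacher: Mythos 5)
Your argument is correct and follows the paper's proof almost verbatim: Severi's inequality reduces everything to the Albanese pencil, and the slope inequality $\lambda(a)\ge \tfrac{4(g-1)}{g}$ pins down $g$ in each case; part (i) is indeed just the last assertion of Proposition \ref{prop:2chiq1}. The only divergence is the source of the bound $\lambda(a)\ge 3$ for a non-hyperelliptic genus-$3$ fibration in (iii): the paper quotes \cite[Lem.~3.1]{Konno_Trigonal} directly, whereas you derive it from the Clifford-index slope inequality $\lambda(f)\ge \tfrac{6(g-1)}{g+1}$ of \cite{Konno_Clifford}, checking that a plane quartic has odd genus and maximal Clifford index $1$ so that the hypotheses (now unconditional after Voisin and Hirschowitz--Ramanan, and in fact classical for $g=3$) are met. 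Both routes give exactly the constant $3$ needed to contradict $K^2_S<3\chi(S)$, so your substitution is harmless; the paper's citation is the more elementary one for this low genus, while yours shows the step as a special case of a more general principle.
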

\begin{proof}
By the Severi inequality, the Albanese map of $S$  is a pencil $a\colon S\to B$, where $B$  has genus $b$. By the slope inequality we have:
\begin{equation}\label{eq:slope}
\frac{K^2_S}{\chi(S)}\ge \lambda(a)\ge \frac{4(g-1)}{g}\ge 2,
\end{equation}
where, as usual, $g$ denotes the genus of a general fiber of $a$.
If $K^2_S=2\chi(S)$, then all the  inequalities in \eqref{eq:slope} are equalities, hence $g=2$ and $b=1$. 

If $K^2_S<\frac 8 3 \chi(S)$, then \eqref{eq:slope} gives  $g=2$.

If $K^2_S<3\chi(S)$, then \eqref{eq:slope} gives  $g\le 3$. The general fiber of $a$ is hyperelliptic, since otherwise $\lambda(a)\ge 3$ by \cite[Lem. 3.1]{Konno_Trigonal}.
\end{proof}
Next we consider regular surfaces that have an irregular \'etale cover.

\begin{prop}\label{prop:K2<3}
Let $S$ be a minimal regular surface of general type. Then:
\begin{enumerate}
\item if $K^2_S<\frac 8 3\chi(S)$, then $S$ has no irregular \'etale cover;
\item if $K^2_S<3\chi(S)$, $S$ has an irregular \'etale cover iff it has a pencil of hyperelliptic curves of genus 3 with at least $4$ double fibers. 
\end{enumerate}
\end{prop}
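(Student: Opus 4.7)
The plan in both parts is to invoke Lemma~\ref{lem:consequence}, which under the hypothesis $K^2_S<4\chi(S)$ supplies a pencil $f\colon S\to\pp^1$ with multiple fibres $m_1F_1,\dots,m_kF_k$ satisfying $\sum_j(m_j-1)/m_j\ge 2$ and general fibre of genus $g\le 1+K^2_S/(4\chi(S)-K^2_S)$.

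For part~(i), the bound on $g$ combined with $K^2_S<\tfrac{8}{3}\chi(S)$ forces $g\le 2$, and since $S$ is of general type $g\ge 2$, so $g=2$. By Bundgaard--Nielsen--Fox there is a Galois orbifold cover $B\to\pp^1$ ramified to order exactly $m_j$ at $f(F_j)$; I let $S_1$ be a connected component of the normalisation of $S\times_{\pp^1}B$. Then $S_1\to S$ is \'etale, and the induced genus-$2$ fibration $f_1\colon S_1\to B$ has no multiple fibres. Since $K^2_{S_1}/\chi(S_1)=K^2_S/\chi(S)<\tfrac{8}{3}<4$, Theorem~\ref{thm:Severi} forces $\albdim(S_1)=1$, and by Proposition~\ref{prop:2chi_char}~(ii) the Albanese pencil of $S_1$ has genus-$2$ fibres; by uniqueness of irrational pencils of genus $\ge 2$ on surfaces of Albanese dimension $1$ this Albanese pencil coincides with $f_1$, so $q(S_1)=g(B)$.

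The endgame for part~(i) combines the slope inequality $K^2_{S_1}\ge 2\chi(S_1)+6(g(B)-1)$ with the Riemann--Hurwitz relation $g(B)-1=\tfrac{d}{2}\bigl(\sum_j(m_j-1)/m_j-2\bigr)$, where $d=\deg(B\to\pp^1)=\deg(S_1\to S)$ (one first checks that the kernel of the Galois action of $G$ on $B$ is trivial via a Hurwitz computation on a generic fibre, using that $G$ acts freely on $S_1$: a subgroup acting freely on a genus-$2$ curve must be trivial). This yields $K^2_S\ge 2\chi(S)+3\bigl(\sum_j(m_j-1)/m_j-2\bigr)$. In the hyperbolic case $\sum>2$, the orbifold cover can be chosen so that $g(B)$, hence $q(S_1)$, grows linearly in $d$; combining Beauville's Theorem~\ref{thm:2q-4} (strictly, since equality would give $K^2_{S_1}=8\chi(S_1)$, a contradiction) with Debarre's inequality~\eqref{eq:debarre} then produces the contradiction $K^2_S\ge\tfrac{8}{3}\chi(S)$. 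The Euclidean case $\sum=2$ (in which $g(B)=1$ forces $q(S_1)=1$) is handled via Proposition~\ref{prop:2chi_char}~(i) applied to $S_1$.

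For the $(\Leftarrow)$ direction of part~(ii), I take the double cover $B\to\pp^1$ branched over the images in $\pp^1$ of any four of the double fibres of $f$; then $B$ is elliptic and the normalisation of $S\times_{\pp^1}B$ is \'etale over $S$ because the ramification of the base cover exactly matches the multiplicity-$2$ ramification of $f$ at the four chosen points, and it is irregular since it dominates $B$. For $(\Rightarrow)$, Lemma~\ref{lem:consequence} yields $g\le 3$ and, by part~(i), $K^2_S\ge\tfrac{8}{3}\chi(S)$. Applying Proposition~\ref{prop:2chi_char}~(iii) to $S_1$ shows that $f_1$, hence $f$, is hyperelliptic of genus $\le 3$; the case $g=2$ is excluded by extending the argument of part~(i) into the range $K^2_S<3\chi(S)$. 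For $g=3$, the adjunction formula $m_j(g(F_j)-1)=g-1=2$ applied to a smooth underlying component $F_j$ of a multiple fibre forces $m_j=2$ and $g(F_j)=2$, so every multiple fibre is a smooth genus-$2$ double fibre; the condition $\sum(1-1/m_j)\ge 2$ with all $m_j=2$ then gives at least four such fibres.

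The main obstacle will be the endgame of part~(i), where the slope inequality alone is insufficient: one needs a combined use of Beauville's Theorem~\ref{thm:2q-4}, Debarre's inequality~\eqref{eq:debarre}, and careful Hurwitz bookkeeping for the orbifold cover as the degree $d$ is allowed to grow. A secondary obstacle is the exclusion of $g=2$ in the range $K^2_S\in[\tfrac{8}{3}\chi(S),3\chi(S))$ for part~(ii), which requires sharper slope bounds for genus-$2$ fibrations with non-reduced multiple fibres.
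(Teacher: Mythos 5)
Your reduction to Lemma \ref{lem:consequence}, your converse implication in (ii) (double cover of $\pp^1$ branched at the images of four double fibres, base change, normalize), and the counting $m_j(p_a(F_j)-1)=g-1=2\Rightarrow m_j=2$ that produces the four double fibres all coincide with the paper's argument. But part (i), and with it the exclusion of $g=2$ inside part (ii), has a genuine gap. The decisive fact --- which the paper uses and which you never invoke --- is that a relatively minimal genus-$2$ fibration has \emph{no} multiple fibres at all: if $mF_0$ is a fibre then $K_SF_0=(2g-2)/m=2/m$ forces $m\le 2$, and for $m=2$ one gets $K_SF_0+F_0^2=1$, which is odd, contradicting adjunction. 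Since Lemma \ref{lem:consequence}(i) guarantees multiple fibres with $\sum_j(m_j-1)/m_j\ge 2$, this immediately gives $g\ge 3$, hence $K^2_S\ge\frac{8}{3}\chi(S)$ from the genus bound, and (i) is done in one line; the same observation rules out $g=2$ in (ii).

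Your substitute for this observation does not work. After (correctly) reaching $g=2$, you pass to an orbifold cover $B\to\pp^1$ of degree $d$ and try to force a contradiction from the slope inequality $K^2_{S_1}\ge 2\chi(S_1)+6(g(B)-1)$ together with Riemann--Hurwitz. Dividing by $d$, this yields only $K^2_S\ge 2\chi(S)+3\bigl(\sum_j(m_j-1)/m_j-2\bigr)$, and Debarre's inequality \eqref{eq:debarre} applied to $S_1$ (with $q(S_1)=g(B)$ growing linearly in $d$) yields in the limit only $K^2_S\ge 2\chi(S)+2\bigl(\sum_j(m_j-1)/m_j-2\bigr)$. When $\sum_j(m_j-1)/m_j$ is close to $2$ (a $(2,3,7)$ configuration gives $2+\tfrac{1}{42}$), neither bound comes anywhere near $\tfrac{8}{3}\chi(S)$, so the claimed contradiction is not produced; you flag this yourself as ``the main obstacle'', but it is not one that more careful Hurwitz bookkeeping can remove. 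Likewise the ``sharper slope bounds for genus-$2$ fibrations with non-reduced multiple fibres'' you call for in (ii) cannot exist, because such fibrations do not exist.
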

\begin{proof}
Assume that $S'\to S$ is an irregular \'etale cover. Then by Lemma \ref{lem:consequence}, there is a pencil $f\colon S\to\pp^1$ such that the general fiber of $f$ has genus $\le 1+\frac{K^2_S}{4\chi(S)-K^2_S}$ and there are multiple fibers $m_1F_1,\dots m_kF_k$ such that $\sum_i\frac{m_i-1}{m_i}\ge 2$.
 Since by the adjunction formula a pencil of curves of genus 2 has no multiple fibers, it follows $g\ge 3$ and $K^2_S\ge \frac 8 3 \chi(S)$. This proves (i).

If $K^2_S<3\chi(S)$, then $g=3$ and the general fiber of $f$ is hyperelliptic (cf. proof of Proposition \ref{prop:2chi_char}). By the adjunction formula, the multiple fibers of $f$ are double fibers, hence   there are at least $4$ double fibers.

Conversely, assume that $f\colon S\to \pp^1$ is a pencil and that  $y_1, \dots y_4\in \pp^1$ are points such that the fiber of $f$ over $y_i$ is double. Let $B\to  \pp^1$ be the double cover branched on $y_1,\dots y_4$ and let $S'\to S$ be obtained from $B\to\pp^1$ by  taking base change with $f$ and normalizing.  The map $S'\to S$ is an \'etale double cover and by construction $S'$ maps onto the genus 1 curve $B$, hence $q(S')\ge 1$.
\end{proof}

\begin{remark} With some more work, it can be shown that Proposition \ref{prop:K2<3}, (ii) also holds for $K^2_S=3\chi(S)$ (\cite[Thm.1.1]{RitaMarg_JDG}). 

\end{remark}

\bigskip

\bigskip

\begin{minipage}{13cm}
\parbox[t]{6.5cm}{Margarida Mendes Lopes\\
Departamento de  Matem\'atica\\
Instituto Superior T\'ecnico\\
Universidade T{\'e}cnica de Lisboa\\
Av.~Rovisco Pais\\
1049-001 Lisboa, PORTUGAL\\
mmlopes@math.ist.utl.pt
 } \hfill
\parbox[t]{5.5cm}{Rita Pardini\\
Dipartimento di Matematica\\
Universit\`a di Pisa\\
Largo B. Pontecorvo, 5\\
56127 Pisa, Italy\\
pardini@dm.unipi.it}
\end{minipage}

\end{document}